\newtheorem{theorem}{Theorem}[section]
\newtheorem{proposition}[theorem]{Proposition}
\newtheorem{corollary}[theorem]{Corollary}
\newtheorem{lemma}[theorem]{Lemma}
\newtheorem{conjecture}[theorem]{Conjecture}
\theoremstyle{definition}
\newtheorem{definition}[theorem]{Definition}
\newtheorem{question}[theorem]{Question}
\numberwithin{equation}{section}
\theoremstyle{definition}
\newtheorem{remark}[theorem]{Remark}
\newtheorem{remarks}[theorem]{Remarks}
\newcommand{\Cb}{\mathbb{C}}
\newcommand{\CP}{\mathbb{CP}}
\newcommand{\Fb}{\mathbb{F}}
\newcommand{\Pb}{\mathbb{P}}
\newcommand{\Qb}{\mathbb{Q}}
\newcommand{\Q}{\mathbb{Q}}
\newcommand{\Rb}{\mathbb{R}}
\newcommand{\Zb}{\mathbb{Z}}
\newcommand{\beq}{\begin{eqnarray}}
\newcommand{\eeq}{\end{eqnarray}}
\newcommand\ssm{\smallsetminus}
\newcommand{\para}[1]{\medskip\noindent\textbf{#1}}
\DeclareMathOperator{\Kum}{Kum}
\DeclareMathOperator{\RBl}{{\Rb}\!\Bl}
\DeclareMathOperator{\Bl}{B\ell}
\DeclareMathOperator{\Aut}{Aut}
\DeclareMathOperator{\GL}{GL}
\DeclareMathOperator{\Orth}{O}
\DeclareMathOperator{\SU}{SU}
\DeclareMathOperator{\SL}{SL}
\DeclareMathOperator{\SO}{SO}
\DeclareMathOperator{\U}{U}
\DeclareMathOperator{\la}{\langle}
\DeclareMathOperator{\ra}{\rangle}
\DeclareMathOperator{\Diff}{Diff}
\DeclareMathOperator{\Mod}{Mod}
\DeclareMathOperator{\specrad}{specrad}
\DeclareMathOperator{\htop}{h_{\rm top}}
\title{\vspace{-2cm}Entropy-minimizing diffeomorphisms of 
pseudo-Anosov type\\
 on K3 surfaces}
\author{Benson Farb\thanks{Supported in part by National Science Foundation Grant DMS-181772 and the Eckhardt Faculty Fund.} \ and Eduard Looijenga
}
\begin{document}
\maketitle
\begin{abstract}
We construct diffeomorphisms of ``pseudo-Anosov type'' on K3 surfaces $M$.  In particular we obtain infinitely many examples of such $f$ that minimize entropy in their homotopy class, and for which neither $f$ nor 
any diffeomorphism homotopic to $f$ preserves any complex structure on $M$.  
\end{abstract}

\section{Introduction}
A closed, simply-connected complex surface is a {\em K3 surface} if 
it admits a nowhere-vanishing holomorphic $2$-form.  Kodaira proved that all K3 surfaces are diffeomorphic; we call these smooth manifolds {\em K3 manifolds}.  Attached to every diffeomorphism\footnote{In this paper ``diffeomorphism'' will mean ``$C^\infty$ diffeomorphism''.} 
 $f\in\Diff(M)$ is its {\em topological entropy} $\htop(f)\in [0,\infty)$, which records the asymptotic exponential growth rate of the number of $f$-orbits measured up to a given accuracy; see \S\ref{subsection:entropy} for the precise definition.  The main result of this paper is the following.
 
 \begin{theorem}
\label{theorem:main1}
Let $M$ be a K3 manifold.  For each $T\in\SL(4,\Zb)$ there exists a diffeomorphism $f_T:M\to M$ 
(constructed explicitly in \S\ref{subsection:construct} below) with the following properties: 
\begin{enumerate}
\item The topological entropy of $f_T$ is given by
\begin{equation}
\label{eq:ftentropy}
\textstyle \htop(f_T)=\sum_{|\lambda_i|>1}\log |\lambda_i|,
\end{equation}
where $\{\lambda_1,\lambda_2,\lambda_3,\lambda_4\}\subset\Cb$ 
denote the eigenvalues of $T$. 

\item  If $T$ has exactly two 
of its  eigenvalues of absolute value $>1$ then $f_T$ minimizes entropy in its homotopy class.
\item If $T$ has $4$ distinct real eigenvalues then neither $f_T$ nor any 
diffeomorphism homotopic to it preserves any complex structure on $M$. 
\item If $T$ has all eigenvalues off the unit circle then $f_T$ is of pseudo-Anosov type (see \S\ref{subsection:firstprop} below for the precise definition).
\end{enumerate}
\end{theorem}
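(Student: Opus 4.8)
The plan is to transport the hyperbolic structure of the toral automorphism $T$ onto $M$ through the Kummer construction of $\S\ref{subsection:construct}$, and then to check that the resulting data matches the definition of pseudo-Anosov type given in $\S\ref{subsection:firstprop}$. Recall that $f_T$ is induced from the linear automorphism $T$ of the torus $A=\R^4/\Z^4$ (equipped with an abelian surface structure): $T$ commutes with the involution $\iota=-\mathrm{id}$, hence descends to $A/\iota$ and lifts to the resolution $X\iso M$, a Kummer K3 surface, in which the $16$ exceptional $(-2)$-spheres form a set $\Sigma$ with $f_T(\Sigma)=\Sigma$.

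Since by hypothesis all eigenvalues of $T$ lie off the unit circle, $T\colon A\to A$ is an Anosov diffeomorphism: write $\R^4=E^u\oplus E^s$ for the sum of the generalized eigenspaces of $T$ for the eigenvalues $\lambda_i$ with $|\lambda_i|>1$, resp. $|\lambda_i|<1$. (There is no neutral summand; this is exactly where the present hypothesis is used in place of the weaker one of Part (2).) Choose $C>0$, $\mu>1$ with $\|T^n v\|\le C\mu^{-n}\|v\|$ for $v\in E^s$ and $\|T^{-n}v\|\le C\mu^{-n}\|v\|$ for $v\in E^u$, all $n\ge 0$. Integrating $E^u$ and $E^s$ produces a transverse pair of $T$-invariant affine foliations $\mathcal F^u,\mathcal F^s$ of $A$, carrying the evident transverse invariant (Lebesgue) measures, with $T$ uniformly expanding $\mathcal F^u$ and contracting $\mathcal F^s$. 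Because $\iota$ fixes the origin and commutes with $T$, it preserves the splitting $E^u\oplus E^s$, hence preserves $(\mathcal F^u,\mathcal F^s)$ and their transverse measures; these therefore descend to $A/\iota$ and pull back to an $f_T$-invariant transverse pair of measured foliations on $M$ whose singular locus is exactly $\Sigma$.

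It remains to analyze the picture along $\Sigma$. Near a $2$-torsion point $p\in A$ take linear coordinates identifying a neighborhood with a ball in $\R^4=\C^2$ in which $T$ equals its differential and $\iota=-\mathrm{id}$; the resolution of $\S\ref{subsection:construct}$ replaces the cone point of $\R^4/\{\pm 1\}$ by a $2$-sphere, and there $\mathcal F^u,\mathcal F^s$ are the families of affine translates of $E^u,E^s$, which descend to the punctured quotient and extend over the exceptional sphere as singular foliations with $\Sigma$ as their only singular configuration; $f_T$ acts on $\Sigma$ through the permutation of the $16$ points given by $T\bmod 2\in\SL(4,\F_2)$ together with the diffeomorphisms of the exceptional $2$-spheres determined by the linear maps $T_p$. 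On the open dense set $M\ssm\Sigma$, which is the quotient of $A\ssm\{16\ \text{points}\}$ by the free involution $\iota$, the map $f_T$ is the quotient of the Anosov map $T$ and so carries there the descended hyperbolic splitting $E^u\oplus E^s$ with the uniform estimates above; this splitting does not extend continuously across $\Sigma$ (the limit as one approaches a point of $\Sigma$ depends on the direction of approach), which is precisely the mechanism producing the foliation singularities at $\Sigma$.

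Putting this together, $(\mathcal F^u,\mathcal F^s)$ is the required $f_T$-invariant transverse pair of singular measured foliations, with singular locus the finite union of $2$-spheres $\Sigma$, uniformly expanded and contracted at the exponential rates $\log|\lambda_i|$, and by Part (1) the resulting expansion factor is $\prod_{|\lambda_i|>1}|\lambda_i|=e^{\htop(f_T)}$; this is the data required by the definition in $\S\ref{subsection:firstprop}$, so $f_T$ is of pseudo-Anosov type. I expect the one genuinely delicate point to be the bookkeeping along $\Sigma$: one must check that ``quotient by $\pm 1$ and resolve'' turns the smooth linear foliations near $p$ into singular foliations of precisely the kind the definition permits (the allowed prong/branching behavior along the exceptional spheres) and that $f_T$ is smooth there in the sense of $\S\ref{subsection:construct}$. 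The remaining ingredients — Anosov-ness of $T$ on the torus, the vanishing contribution of $\Sigma$ to the entropy, and the cohomological shadow of the construction (that $f_T^*$ acts on $H^2(M;\R)\iso\wedge^2\R^4\oplus\R^{16}$ as $\wedge^2 T\oplus(\text{permutation matrix})$, with spectral radius $|\lambda_1\lambda_2|$ for the two largest $|\lambda_i|$) — are routine.
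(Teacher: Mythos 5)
Your proposal founders at the very first step: you take for granted that the linear map $T$ ``descends to $A/\iota$ and lifts to the resolution $X\iso M$.'' This is exactly what fails for a general $T\in\SL(4,\Zb)$, and repairing it is the main content of the construction in \S\ref{subsection:construct}. The resolution of $A/\iota$ is built from the \emph{complex} blowup $\Bl_{A[2]}(A)$, whose exceptional divisors parametrize complex lines in the tangent spaces at the $2$-torsion points; a $T$ that is not $\Cb$-linear for the chosen complex structure does not act on these lines and hence does not lift to $\Bl_{A[2]}(A)$ in any natural way. The paper's actual $f_T$ is obtained by passing to the real oriented blowup, gluing on a collar $[0,1]\times S^3$ at each of the sixteen boundary spheres, and running a path $\gamma_T$ in $\SL(V)$ from $T$ to a complex-linear map along the collar before performing the Hopf collapse. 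Consequently $f_T$ is \emph{not} the quotient of the Anosov map $T$ on all of $M\ssm\Sigma$: it has been modified on a neighborhood of the sixteen $(-2)$-spheres, the invariant splitting and foliations exist only outside that neighborhood, and the paper's definition of ``pseudo-Anosov type'' is calibrated to precisely this (Anosov on $M-N$ for any prescribed neighborhood $N$ of the spheres, for a suitable choice of $\gamma_T$) rather than to a pair of singular measured foliations with singular locus exactly $\Sigma$. Your claim that the foliations ``extend over the exceptional sphere as singular foliations'' is also unsupported: the planes $E^u,E^s$ are generically not complex lines, so there is no evident extension across the exceptional $\Cb\Pb^1$'s, and in the actual construction no such extension is needed or asserted.

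Beyond this, the proposal does not engage with Properties 2 and 3, and Property 1 is not routine given the real construction. Property 1 requires showing that the collar regions, the Hopf collapse, and the quotient by $\iota$ contribute nothing to the entropy; the paper does this with Bowen's fiber inequality, the Adler--Konheim--McAndrew quotient and union theorems, and a separate proposition that the projectivization of a linear map on $S^{n-1}$ has zero topological entropy. Property 2 does follow from Property 1 plus Yomdin's theorem and the computation of $\specrad(f_{T*})$ on $H_2(M)$, which you sketch correctly. Property 3, however, is a statement about the entire homotopy class: the paper argues via the Hodge decomposition that an invariant complex structure would force a positive-definite $f_T^*$-invariant plane in $H^2(M;\Rb)$ projecting into $\wedge^2 V$, hence a complex structure $J$ on $V$ commuting with $T$, which is impossible when $T$ has four distinct real eigenvalues. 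None of this appears in your write-up, so as it stands the proposal establishes none of the four assertions.
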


\begin{remarks}
\ 
\begin{enumerate}
\item Property 2 of Theorem \ref{theorem:main1} was known for the very special case when $f_T$ is a biholomorphic automorphism.  This follows from the fact that complex K3 surfaces are Kahler (Siu \cite{Si}), together with Gromov' Theorem  \cite{Gr} that any biholomorphic automorphism of any closed, Kahler manifold minimizes entropy in its homotopy class.

\item There are infinitely many examples (even up to conjugation and taking powers) of $T\in\SL(4,\Zb)$ satisfying all of 1-4 in Theorem \ref{theorem:main1}: take $T$ to be the companion matrix of 
\[P(x)=(x^2+ax+1)(x^2+bx+1)\ \ \text{for any $a,b\geq 2, a\neq b$}.\]
There are also infinitely many irreducible examples:  
\[P(x)=x^4-(n^2+n)x^2+1\ \ \text{for any $n\geq 2$}.\]  Thus Theorem \ref{theorem:main1} produces infinitely many entropy-minimizing diffeomorphisms $f_T$ of the K3 manifold $M$.    Previously known entropy-minimizing diffeomorphisms on closed manifolds were given in the 1970's and 1980's:  for zero-entropy diffeomorphisms (Shub-Sullivan \cite{SS}); for biholomorphic diffeomorphisms on Kahler manifolds (Gromov \cite{Gr});  Anosov diffeomorphisms on nilmanifolds (Franks, see Remark \ref{remark:franks}); and linear-like diffeomorphisms 
of $S^3\times S^3$ (Fried \cite{Fr}).   Note that pseudo-Anosov homeomorphisms on surfaces minimize entropy in their homotopy class  (Fathi-Shub \cite{FLP}), but these are only smooth off of a finite set.

\item The construction of $f_T$ actually gives uncountably many 
nonconjugate (in $\Diff(M)$) entropy-minimizers in a given isotopy class; see Remark \ref{remark:nonconjugate} below.  We do not know other 
examples of this phenomenon other than (skew) products. 

\end{enumerate}
\end{remarks}

The construction of $f_T$ in Theorem \ref{theorem:main1} starts with 
a real variation of the {\em Kummer construction} of complex K3 surfaces.  
One starts with a complex torus $A:=\Cb^2/\Lambda$, blows up at the set $A[2]$ of 
sixteen $2$-torsion points, and then takes a quotient by the action induced by the involution $(z,w)\mapsto (-z,-w)$, giving a K3 surface $M$.  The group $\SL(4,\Zb)$ acts on $A$ by linear diffeomorphisms.  Any $T\in\SL(4,\Zb)$ preserving the complex structure on $A$ will induce a biholomorphic automorphism of $M$.  The difficulty is that the typical 
$T\in\SL(4,\Zb)$ does not do this, and a more subtle construction must be done.  We are able to do this in a way that is natural in the sense that 
the association $T\mapsto f_T$  induces an 
injective homomorphism 
\[
\rho:\SL(4,\Zb)\to \Mod(M)
\]
where $\Mod(M)=\pi_0(\Diff(M))$  is the mapping class group of $M$; see Remark \ref{remark:naturality} below.  Let $H_M:=H(M;\Zb)\cong\Zb^{22}$ equipped with its intersection form.  Then $H_M$ is the unique even, unimodular lattice of signature $(3,19)$; its 
orthogonal group $\Orth(H_M)$ is an arithmetic subgroup of the real semisimple Lie group $\Orth(H_M\otimes\Rb)\cong\Orth(3,19)(\Rb)$.  The composition 
\[\SL(4,\Zb)\to \Mod(M)\to\Orth(H(M))\]
extends to a representation $\SL(4,\Rb)\to \Orth(3,19)(\Rb)$ of Lie groups.

\para{Homological entropy.}
Theorem \ref{theorem:main1} relates closely to homological entropy.  
For a linear transformation $A$ of a finite-dimensional  real vector space, let 
$\specrad(A)$ denote the {\em spectral radius} of $A$; that is, the maximum of the absolute value of the eigenvalues of $A$.

\begin{proof}[Proof of Property 2 in Theorem \ref{theorem:main1}]
Call the eigenvalues in the hypothesis $\lambda_1$ and $\lambda_2$.  
Item 1 of Theorem \ref{theorem:main1} gives that 
$\htop(f_T)=\log|\lambda_1|+\log|\lambda_2|$.  On the other hand, Shub's Entropy Conjecture (proved by Yomdin for any $C^\infty$ diffeomorphism $g$ of any closed manifold $M$) gives a lower bound 
\[\htop(g)\geq \log\specrad[g_*:H_*(M; \Rb)\to H_*(M; \Rb).]\]
In Corollary \ref{corollary:Yomdin} below we compute this for $(f_T)_*=g_*$ for any $g$ homotopic to $f_T$ to obtain 
\begin{equation}
\label{eq:hbound}
\begin{array}{ll}
\htop(g)&\geq \log\specrad[g_*:H_*(M; \Rb)\to H_*(M; \Rb)]\\
&=\log\specrad[(f_T)_*:H_*(M; \Rb)\to H_*(M; \Rb)]\\
&=\log\specrad[(f_T)_*:H_2(M; \Rb)\to H_2(M; \Rb)]\\
&=\log|\lambda_1\lambda_2|=\log|\lambda_1|+\log|\lambda_2|.
\end{array}
\end{equation}
This together with \eqref{eq:ftentropy} implies Property 2 of Theorem \ref{theorem:main1}.
\end{proof}

\para{Is the homological lower bound sharp?} 
There are infinitely many $T\in\SL(4,\Zb)$ (even up to conjugacy and taking powers) having $3$ eigenvalues $\lambda_1, \lambda_2, \lambda_3$ lying outside the unit circle. This holds for example when $T$ is the companion matrix of $x^4+ax+1$ for any integer $a\geq 3$.  In this 
case something interesting happens: labeling the eigenvalues 
so that with $|\lambda_1|\geq |\lambda_2|\geq |\lambda_3|>1$, Theorem \ref{theorem:main1} gives that 
\[\htop(f_T)=\log|\lambda_1|+ \log|\lambda_2|+\log|\lambda_3|\]
which is strictly greater than the homological lower bound $\log|\lambda_1|+\log|\lambda_2|$ given in \eqref{eq:hbound}. The point here is 
the following.  On the one hand, the computation of 
$\htop(f_T)$ given in \S\ref{subsection:htopmain} below picks up 
$\log|\lambda_1\lambda_2\lambda_3|$ coming from the action of $T$ on 
$H_3(A)$, where $A$ is the $4$-torus in the Kummer construction (see \S\ref{section:construction}).   On the other hand, $H_3(M)=0$, and the only nontrivial homological data about $f_T\in\Diff(M)$ is its action on $H_2(M)$, which records only the products of {\em pairs} of eigenvalues.  

As far as we can tell, the only known lower bounds for $\htop$ of an isotopy class of diffeomorphism on an $M$ with $\pi_1(M)=0$ come from 
its induced action on homology  as above (i.e.\ Yomdin's Theorem).  In his 2006 ICM talk, Shub (\cite{Sh}, \S 4.2) asks whether this homological lower bound is always realized, or if there is at least a sequence $f_n\in\Diff(M)$ in the given  isotopy class converging to this lower bound.  
We make the following conjecture, which would give a negative answer to Shub's questions.

\begin{conjecture}[{\bf Homologically undetectable entropy}]
\label{conjecture:egap}
Let $T\in\SL(4,\Zb)$ have exactly $3$ eigenvalues $\lambda_1, \lambda_2,\lambda_3$ lying outside the unit circle, ordered so that $|\lambda_1|\geq |\lambda_2|\geq |\lambda_3|>1$.  Then the minimal entropy of any $g\in\Diff(M)$ isotopic to $f_T$ satisfies 
\[\htop(g)>\log\specrad[g_*:H_*(M; \Rb)\to H_*(M; \Rb)]\]
with the difference between the two sides being close to (equal to?) 
$\log|\lambda_3|$.
\end{conjecture}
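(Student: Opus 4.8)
Conjecture~\ref{conjecture:egap} is, in substance, a single \emph{lower bound}. By Theorem~\ref{theorem:main1}(1) the diffeomorphism $f_T$ has $\htop(f_T)=\log|\lambda_1\lambda_2\lambda_3|=\log|\lambda_1\lambda_2|+\log|\lambda_3|$ and so already realizes the conjectured minimum; what remains is to show that \emph{every} $g\in\Diff(M)$ isotopic to $f_T$ satisfies $\htop(g)>\log\specrad[g_*\colon H_*(M;\R)\to H_*(M;\R)]=\log|\lambda_1\lambda_2|$, and optimally the sharp bound $\htop(g)\ge\log|\lambda_1\lambda_2\lambda_3|$. As noted above, the only lower bounds known for $\htop$ on a simply connected manifold are Yomdin's homological ones, and these cannot see the factor $|\lambda_3|$ because $H_3(M;\R)=0$; so any proof must supply a genuinely \emph{non-homological} lower bound, and what follows is a plan for producing one.

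The guiding principle is that the missing $\log|\lambda_3|$ is dynamical rather than homological. In the Kummer picture of \S\ref{section:construction}, $f_T$ is built from the linear automorphism $T$ of $A=\C^2/\Lambda$, which in the case at hand is \emph{Anosov} (all $|\lambda_i|\ne 1$) with a $3$-dimensional expanding distribution; this distribution is $\iota$-invariant, and through the blow-up $\widetilde A=\Bl_{A[2]}A$ and the quotient $M=\widetilde A/\tilde\iota$ it descends to an $f_T$-invariant singular $3$-dimensional ``unstable lamination'' $\Fcal^u$ on $M$ whose compact plaques $D$ satisfy $\mathrm{vol}_3\!\big(f_T^n(D)\big)\asymp|\lambda_1\lambda_2\lambda_3|^n$. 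Together with the $1$-dimensional stable lamination this is precisely the structure making $f_T$ of pseudo-Anosov type (Theorem~\ref{theorem:main1}(4)), and from this vantage point Conjecture~\ref{conjecture:egap} is the four-dimensional analogue of the theorem of Fathi and Shub \cite{FLP} that a pseudo-Anosov homeomorphism of a surface minimizes topological entropy in its isotopy class, with entropy the log of its dilatation.

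This suggests a three-step plan. First, attach to the pseudo-Anosov structure a \emph{dilatation} $\lambda(f_T)$ — via the leafwise $3$-volume growth of $\Fcal^u$, or via the spectral radius of $(f_T)_*$ acting on a suitable space of transverse measures or currents and its completion — and check that $\lambda(f_T)=|\lambda_1\lambda_2\lambda_3|$, or at least $\lambda(f_T)>|\lambda_1\lambda_2|$. Second, show that $\lambda$ depends only on the isotopy class of $f_T$ — the analogue of the fact that a surface pseudo-Anosov's dilatation is determined by its mapping class. Third, deduce $\htop(g)\ge\log\lambda(f_T)$ for every $g$ isotopic to $f_T$; the natural mechanism is the Yomdin--Newhouse inequality $\htop(g)\ge\limsup_n\tfrac1n\log\mathrm{vol}_3\!\big(g^n(D)\big)$ for a fixed plaque $D$ of $\Fcal^u$, together with a shadowing argument forcing $g^n(D)$ to grow in $3$-volume at least as fast as $f_T^n(D)$ does.

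A more hands-on route to the last two steps would bypass the lamination: prove that the configuration of the sixteen $(-2)$-spheres $C_1,\dots,C_{16}\subset M$ arising from the exceptional divisors is \emph{rigid}, in the sense that every $g$ isotopic to $f_T$ is isotopic to a diffeomorphism preserving $\bigcup_i C_i$ (permuting the $C_i$ as $(f_T)_*$ permutes $\{[C_i]\}\subset H_2(M)$) and standard near them. Such a $g$ would lift to $\widetilde A$, blow down to a diffeomorphism $\hat g$ of $A$ homotopic to $T$, and have the same entropy as $\hat g$; then the sharpness of the homological bound on the $4$-torus (the mechanism behind Franks' nilmanifold examples) would give $\htop(g)=\htop(\hat g)\ge\log|\lambda_1\lambda_2\lambda_3|$ — the sharp bound. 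Either way the obstacle is the same, and it is serious: a plaque $D$ is not a cycle and a leaf of $\Fcal^u$ carries the class $0$ in $H_3(M;\R)$, so nothing homological prevents $\mathrm{vol}_3\!\big(g^n(D)\big)$ from collapsing; there is at present no theory of measured laminations, dilatation, or Fathi--Shub-type rigidity for diffeomorphisms of $4$-manifolds; and configurations of embedded $2$-spheres in $4$-manifolds are not rigid in general, so it is unclear whether a general $g$ isotopic to $f_T$ can be isotoped to preserve $\Fcal^u$, the curves $C_i$, or the branched double cover $\widetilde A\to M$. Since $\htop$ is not an isotopy invariant and genuinely drops under isotopy, some new rigidity input of exactly this kind is unavoidable — supplying it is the crux. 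Finally, ``close to (equal to?)'' hedges against the possibility that even a successful form of the third step yields only $\log|\lambda_1\lambda_2|+c\log|\lambda_3|$ for some $0<c\le 1$ rather than $c=1$.
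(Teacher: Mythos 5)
This statement is a \emph{conjecture} in the paper: the authors offer no proof of it, and indeed they introduce it precisely because they believe it falls outside the reach of all currently known lower-bound techniques (which, as you correctly note, are homological and hence blind to $\lambda_3$ since $H_3(M;\Rb)=0$). So there is no ``paper's own proof'' to compare against, and your proposal should be judged on whether it actually closes the gap. It does not, and you say so yourself: every route you sketch funnels through an unproven rigidity statement --- either (a) that a ``dilatation'' defined via leafwise $3$-volume growth of the unstable lamination is an invariant of the \emph{isotopy class} and not just of $f_T$, or (b) that every $g$ isotopic to $f_T$ can be isotoped to preserve the sixteen $(-2)$-spheres and hence lifted to the torus. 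Both are exactly the open content of the conjecture. For (a), the Yomdin--Newhouse volume-growth inequality bounds $\htop(g)$ by the growth of $\mathrm{vol}_3(g^n(D))$ for a disk $D$ that $g$ actually carries; since $D$ is not a cycle and the leaves are null-homotopic in the relevant sense, nothing forces $g^n(D)$ to grow once $g$ is perturbed within its isotopy class, and there is no shadowing theory available here because $f_T$ is not Anosov on all of $M$ (it degenerates near the sixteen spheres). For (b), isotopy classes of configurations of embedded $2$-spheres in a $4$-manifold are not controlled by their homology classes, so the blow-down step has no foundation. Your plan is a sensible and well-informed research program --- it correctly diagnoses why the bound must be non-homological and identifies the pseudo-Anosov/Fathi--Shub analogy that the authors themselves are pursuing --- but it is a statement of the problem, not a solution, and should not be presented as a proof attempt that merely has a technical step left to check.

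One smaller caution: your opening sentence asserts that $f_T$ ``already realizes the conjectured minimum.'' Theorem~\ref{theorem:main1}(1) gives $\htop(f_T)=\log|\lambda_1\lambda_2\lambda_3|$, which is an \emph{upper} bound for the minimal entropy in the isotopy class, but the conjecture deliberately hedges (``close to (equal to?)'') on whether this value is actually the infimum; nothing rules out some $g$ isotopic to $f_T$ with entropy strictly between $\log|\lambda_1\lambda_2|$ and $\log|\lambda_1\lambda_2\lambda_3|$. So even the identification of the conjectured minimum with $\htop(f_T)$ is itself part of what is open.
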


Note that the linear diffeomorphism $g_T$ on the $4$-torus $A$ induced by $T$ does satisfy 
\begin{equation}
\label{eq:realize3}
\htop(g_T)=\log|\lambda_1|+ \log|\lambda_2|+\log|\lambda_3|=\log\specrad[(g_T)_*:H_*(M; \Rb)\to H_*(M; \Rb)]
\end{equation}
and so $g_T$ minimizes entropy in its homotopy class.  The reason that 
\eqref{eq:realize3} holds is that for the action of 
$(g_T)_*$ picks up all $3$ eigenvalues 
$\lambda_1,\lambda_2,\lambda_3$ on $H_3(A)$.

\para{Relation with other work.} Theorem \ref{theorem:main1} addresses the following question, which is implicit in work of Smale, Shub, Shub-Sullivan and others.

\begin{question}
Does every nontrivial isotopy class of diffeomorphism on a smooth manifold have a representative minimizing entropy in the isotopy class? If so, find such a representative.
\end{question}

As we indicate above, Theorem \ref{theorem:main1} seems to give 
(as far as we can tell) the first new examples since the 1980s of an entropy minimizer.

We also mention that there exist positive entropy biholomorphic automorphisms of K3 surfaces whose dynamics is quite different than what appears on Kummer surfaces; see McMullen \cite{Mc} and Cantat \cite{Ca}.

This paper fits into a program of the authors to give a ``Thurston classification'' for elements of the mapping class 
group of the K3 manifold.  This paper is the ``pseudo-Anosov'' case.  See \cite{FL1} and \cite{FL2} for the finite order and reducible cases, respectively.

\para{Acknowledgements.} We thank Amie Wilkinson for helpful discussions and Dan Margalit for numerous corrections.  We are extremely grateful to Serge Cantat and Curt McMullen for extensive, insightful comments and corrections on an earlier draft of this paper.

\section{Constructing the diffeomorphism $f_T$}
\label{section:construction}

In this section we construct, for each $T\in\SL(4,\Zb)$, 
a diffeomorphism $f_T\in\Diff(M)$ of the K3 manifold $M$.  In \S\ref{subsection:firstprop} and 
\S\ref{section:dynamical} we will prove that $f_T$ has the properties claimed in the statement of Theorem \ref{theorem:main1}.  
The construction will involve a path from the  identity to $T$ in the ambient Lie group $\SL(4, \Rb)$; 
since the  fundamental group of the  latter has order $2$, there are two homotopy classes of such paths.

Throughout this section, let $V$ denote a real, $4$-dimensional vector space and let $L\subset V$ be a lattice in $V$.

\subsection{The Kummer surface}
\label{subsection:Kummer}
The quotient $A:=V/L$ is a real torus of  dimension  $4$, and is a group under addition mod $L$.  Its subgroup $A[2]$ of $2$-torsion points can be identified with $\frac{1}{2}L/L\cong \Fb_2\otimes L$, and so $A[2]$ has the structure of  a $\Fb_2$-vector space of dimension $4$. 

Assume now that $V$ has a complex structure. Then $A$ is a complex surface and one can define the complex blowup 
\[
\Bl_{A[2]}(A)\to A
\]
of $A[2]$ in $A$. Thus $\Bl_{A[2]}(A)$ has sixteen exceptional divisors, each a copy of $\Cb\Pb^1$ with self-intersection $-1$.  Since $-{\rm I}\in \GL(L)$ is $\Cb$-linear, its  associated involution of $A$  is holomorphic and lifts to a holomorphic involution 
\[\iota:\Bl_{A[2]}(A)\to \Bl_{A[2]}(A).\]
Although $\iota$ does not act freely on $A$ (it fixes the exceptional set pointwise), the orbit space of $\iota$ is a nonsingular complex surface, the {\em Kummer surface} 
\[
\Kum(A):=A/\la\iota\ra
\]  
associated with $A$.  If $A$ is projective then so is $\Kum(A)$.  The complex manifold $\Kum(A)$ is a particular type of K3 surface.   We refer to a K3 manifold of this type as a  {\em Kummer manifold}.

\subsection{The case of complex automorphisms $T$}\label{subsect:complexcase}

Since $V$ can be identified with  $\Rb\otimes L$, there is a representation 
\[\GL(L)\to \Aut(A)\]
denoted by $T\mapsto T_A$.  The restriction of this map 
to $A[2]$, denoted $T\mapsto T_A[2]$, factors through an action 
of $\GL(\Fb_2\otimes L)$.
Note that  $A[2]$ is the fixed-point set of the involution  of $A$ induced by $-{\rm I}\in \SL(L)$. The group $\Fb_2\otimes L$ also acts on $A$ as a translation group.  Together with the $\GL(L)$-action this gives an action of  the semi-direct product $(\Fb_2\otimes L)\rtimes\GL(L)$ on $A[2]$.

Let $T\in\SL(L)\cong\SL(4,\Zb)$ be given.  If $T$ 
leaves invariant complex structure on $V$ (relative to which $T$ then becomes complex-linear), then the induced diffeomorphism $T_A:A\to A$ is biholomorphic, and so preserves the set of complex lines in the tangent spaces of $A[2]$.  It follows that $T_A$ lifts to an automorphism 
$\tilde{T}_A$ of $\Bl_{A[2]}(A)$.  Since $T$ commutes with $-{\rm I}\in \SL(L)$ it follows that $\tilde{T}_A$ commutes with $\iota$, so that 
the automorphism $\tilde{T}_A$ descends to a biholomorphic automorphism of $\Kum(A)$.  We let $f_T$ denote this biholomorphic automorphism.  

These automorphisms are well-studied, see e.g.\  \cite{Mc}.  However, most $T\in\SL(L)$ are not complex-linear.  Such $T$ 
do not induce in an obvious way a map 
$\Bl_{A[2]}(A)\to \Bl_{A[2]}(A)$, and so the construction just described 
fails.  The four-step construction in the next subsection is meant to repair this.   

\subsection{The construction of $f_T$}
\label{subsection:construct}
We first need the notion of real oriented blow-up.

\begin{definition}[{\bf Real oriented blow-up}]
Let $N$ be an $n$-manifold and $p\in N$. Then the \emph{real, oriented blow-up}  is an $n$-manifold  $\RBl_p(N)$ with boundary $\partial \RBl_p(N)$.  It comes with a differentiable map $\pi: \RBl_p(N)\to N$ and is specified by the following properties:
\begin{enumerate}
\item $\pi$ is a diffeomorphism over $N\ssm \{p\}$; and 
\item $\pi^{-1}(p)=\partial \RBl_p(N)$, and can be identified  with 
the $(n-1)$-sphere of rays in $T_pN$.   Over  a coordinate neighborhood of $p$ the map $\pi:  [0, 1)\times S^{n-1}\to\Rb^n$ is given by 
$\pi(t, u):=tu$.
\end{enumerate}
\end{definition}

See Figure \ref{figure:rblow1}.  
\begin{figure}[h]
\begin{center}
\includegraphics[scale=0.24]{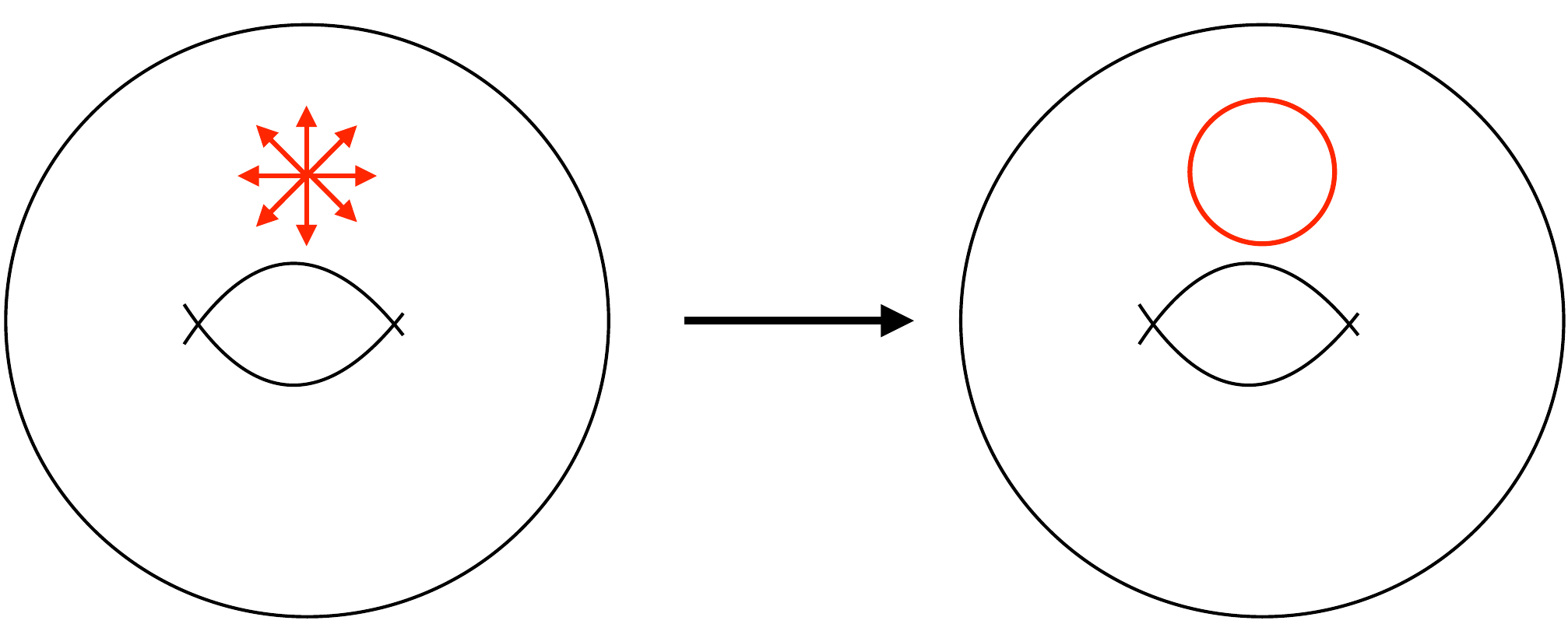}
\end{center}
\caption{\small The real-oriented blowup on a $2$-dimensional manifold.  The result is a $2$-manifold with one boundary component. }
\label{figure:rblow1}
\end{figure}

It is clear  that every differentiable curve $\gamma: [0, \epsilon)\to N$ with $\gamma(0)=p$ and $\gamma' (0)\not=0$ lifts uniquely to a differentiable curve $\tilde\gamma: [0, \epsilon)\to\RBl_p(N)$ whose initial point only depends on the ray in $T_pN$ spanned by
$\gamma' (0)$. 

The real-oriented blowup $\RBl_p(N)$ is canonical in the sense that every diffeomorphism of $N$ that fixes $p$ lifts to a diffeomorphism of $\RBl_p(N)$.   More generally, if $P\subset N$ is a discrete set , then one can define 
$\RBl_{P}(N)\to N$, and if $f\in\Diff(N)$ preserves $P$, then 
then $f$  lifts to a diffeomorphism $\RBl_{P}(N)\to \RBl_{P}(N)$.
 
If $N$ is $4$-dimensional and comes with  complex structure at $p\in N$, then the  real oriented blowup $\RBl_p(N)\to N$ factors through  the complex blowup $\Bl_p(N)\to N$ via the  Hopf map  which assigns to a real ray in the tangent space $T_pN$ the complex line spanned by it (in  coordinates,  $S^3\to \CP^1\cong S^2$). We call this procedure the {\em Hopf collapse}.

We will need the following lemma.

\begin{lemma}\label{lemma:fundgrp}
The inclusion of $\GL(2,\Cb)$ into the identity component $\GL(4, \Rb)^+$ of $\GL(4, \Rb)$ induces a surjection on fundamental groups.
\end{lemma}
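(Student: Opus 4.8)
The plan is to reduce to maximal compact subgroups and then trace through one concrete loop of rotations. By the polar decomposition, $\GL(2,\Cb)$ deformation retracts onto $\U(2)$ and $\GL(4,\Rb)^+$ deformation retracts onto $\SO(4)$, and these retractions are compatible with the inclusion in question: a unitary automorphism of $\Cb^2$ is an $\Rb$-linear orthogonal automorphism of the underlying $\Rb^4$, with real determinant $|\det_\Cb|^2=1$ (this positivity is also why $\GL(2,\Cb)$ lands in $\GL(4,\Rb)^+$ at all), so the inclusion $\GL(2,\Cb)\hookrightarrow\GL(4,\Rb)^+$ restricts to $\U(2)\hookrightarrow\SO(4)$. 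Hence it suffices to prove that $\pi_1(\U(2))\to\pi_1(\SO(4))$ is surjective.

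Next I would cut the inclusion down to a circle. The map $\U(1)\hookrightarrow\U(2)$, $z\mapsto\mathrm{diag}(z,1)$, induces an isomorphism on $\pi_1$: in the fibration $\SU(2)\to\U(2)\xrightarrow{\det}\U(1)$ the fibre $\SU(2)\cong S^3$ is $2$-connected, so $\det_*\colon\pi_1(\U(2))\xrightarrow{\ \sim\ }\pi_1(\U(1))\cong\Zb$, and $\det$ is a one-sided inverse of our map. So it is enough to show that the composite $\U(1)\hookrightarrow\SO(4)$ is surjective on $\pi_1$. Under the identification $\Cb^2=\Rb^4$ this composite sends $e^{i\theta}$ to the rotation by $\theta$ of the plane $\langle e_1,e_2\rangle$ fixing $\langle e_3,e_4\rangle$; it therefore factors as $\SO(2)\hookrightarrow\SO(3)\hookrightarrow\SO(4)$, where $\SO(3)$ acts on $\langle e_1,e_2,e_3\rangle$ and $\SO(4)$ fixes $e_4$, and the standard generator $\theta\mapsto e^{i\theta}$, $\theta\in[0,2\pi]$, of $\pi_1(\U(1))$ goes to the loop $\theta\mapsto(\text{rotation by }\theta\text{ about the }e_3\text{-axis})$.

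Two standard facts then finish the argument. First, $\pi_1(\SO(3))\cong\Zb/2$, with the above rotation loop as its nontrivial element: under the universal double cover $\SU(2)\to\SO(3)$ that loop lifts to the path $t\mapsto\mathrm{diag}(e^{\pi i t},e^{-\pi i t})$, $t\in[0,1]$, joining $\mathrm{I}$ to $-\mathrm{I}$, so it does not lift to a loop (the ``belt trick''). Second, the fibre bundle $\SO(3)\to\SO(4)\to S^3$ (the orbit map on the unit sphere) together with $\pi_1(S^3)=0$ shows $\pi_1(\SO(3))\to\pi_1(\SO(4))$ is onto. Combining: the generator of $\pi_1(\U(1))$ maps to a generator of $\pi_1(\SO(3))$, which surjects onto $\pi_1(\SO(4))$; hence $\pi_1(\U(2))\to\pi_1(\SO(4))$, and so $\pi_1(\GL(2,\Cb))\to\pi_1(\GL(4,\Rb)^+)$, is surjective. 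The whole argument is bookkeeping of classical facts; the one step requiring genuine care is the last one — making sure the image of a generator of $\pi_1(\GL(2,\Cb))\cong\Zb$ in $\pi_1(\GL(4,\Rb)^+)$ is nonzero rather than trivial — and the explicit $\SU(2)$-lift above is what secures it.
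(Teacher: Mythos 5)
Your proof is correct, and the reduction to maximal compact subgroups (to $\U(2)\subset\SO(4)$) is the same first step the paper takes, via Gram--Schmidt rather than polar decomposition. The second half, however, follows a genuinely different route. The paper exploits the exceptional isomorphism $\mathrm{Spin}(4)\cong\SU(2)\times\SU(2)$: it observes that the preimage of $\U(2)$ under the universal double cover $\SU(2)\times\SU(2)\to\SO(4)$ is the connected subgroup $\SU(2)\times\U(1)$, which contains the deck transformation $(-{\rm I},-{\rm I})$; a path to that element therefore projects to a loop in $\U(2)$ hitting the nontrivial class of $\pi_1(\SO(4))\cong\Zb/2$, and surjectivity follows in one stroke. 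You instead cut $\U(2)$ down to the circle $z\mapsto\mathrm{diag}(z,1)$ (correctly justified via the determinant fibration), identify its image as a planar rotation loop in $\SO(2)\subset\SO(3)\subset\SO(4)$, detect its nontriviality in $\pi_1(\SO(3))$ by lifting to $\SU(2)$, and push forward along the surjection $\pi_1(\SO(3))\to\pi_1(\SO(4))$ coming from the bundle $\SO(3)\to\SO(4)\to S^3$. Your argument is longer but avoids any special structure of $\SO(4)$ and shows more: the identical chain proves that $\U(n)\hookrightarrow\SO(2n)$, hence $\GL(n,\Cb)\hookrightarrow\GL(2n,\Rb)^+$, is surjective on $\pi_1$ for every $n\geq 1$. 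The paper's argument is shorter and self-contained in dimension four, which is all that is needed here. Your closing remark correctly identifies the one point of substance --- verifying that the generator of $\pi_1(\GL(2,\Cb))$ maps to something nonzero --- and the explicit $\SU(2)$-lift does secure it.
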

\begin{proof}
The Gram-Schmidt  process  shows that the inclusions  
${\rm U}(2)\subset \GL(2,\Cb)$ and 
 $\SO(4)\subset\SL(4, \Rb)\subset \GL(4, \Rb)^+$ are deformation retracts, and hence it suffices to prove that the inclusion ${\rm U}(2)\subset \SO(4)$ 
induces a surjection on fundamental groups. The  group $\SO(4)$ is 
doubly covered by $\SU(2)\times \SU(2)$ with the covering  involution 
defined by $(-{\rm I},-{\rm I})$ and such that the preimage of $\U(2)$ is 
$\SU(2)\times \U(1)$, where $\U(1)$ is embedded in $\SU(2)$ as the diagonal group:
$u\mapsto 
(\begin{smallmatrix}
u & 0\\
0 & \bar u
\end{smallmatrix}) $. Since $\SU(2)\times U(1)$ contains the covering involution $(-1,-1)$, the lemma follows.
\end{proof}

With the above in hand, we are ready to construct, for each $T\in\SL(L)$, a diffeomorphism $f:\Kum(A)\to\Kum(A)$. 

\para{Step 1 (Lift to the real-oriented blowup):} 
Going back to the torus $A$ above, let 
\[
Y:=\RBl_{A[2]}(A)\to A
\]
be the real oriented blowup of $A$ at the $16$ points $A[2]$.  See Figure \ref{figure:4dblow}.  

\begin{figure}[h]
\begin{center}
\includegraphics[scale=0.24]{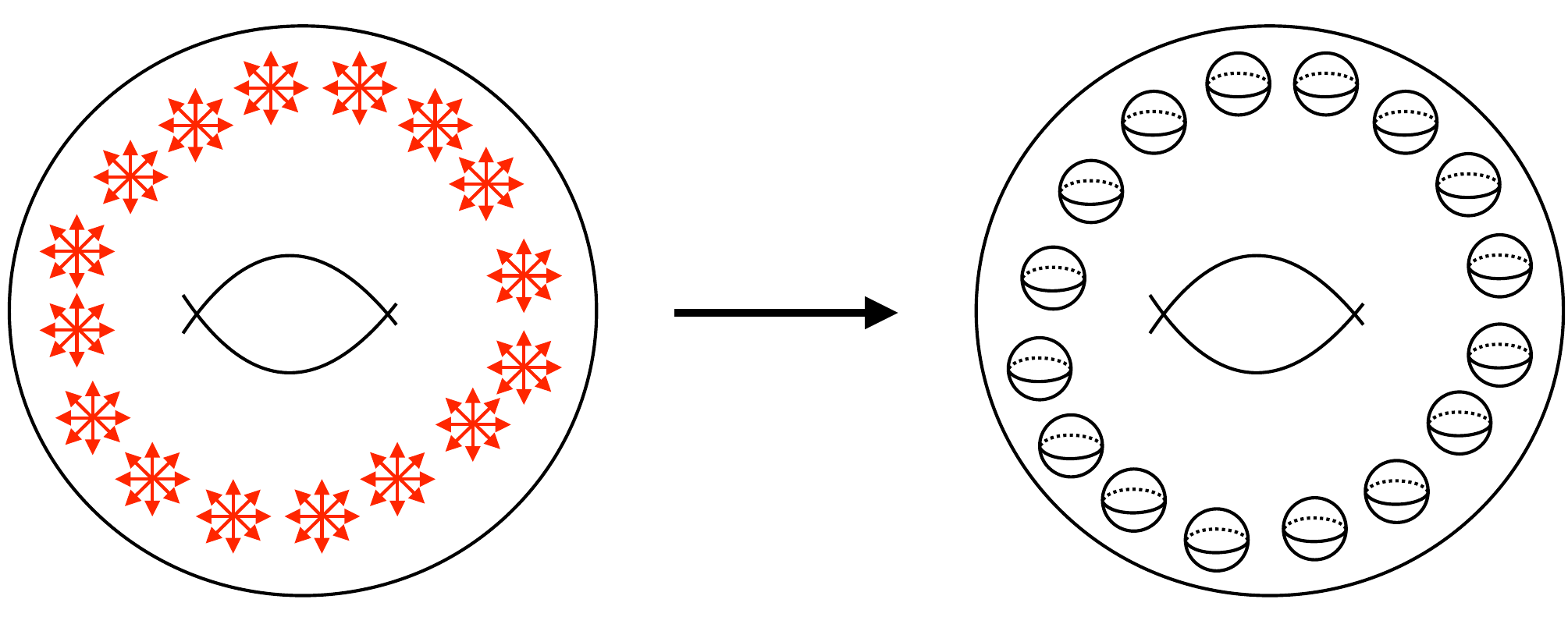}
\end{center}
\caption{\small Performing a real-oriented blowup at the sixteen $2$-torsion points $A[2]$ of the $4$-dimensional torus $A$.  The result is a 
compact manifold $Y:=\RBl_{A[2]}(A)\to A$ with sixteen boundary components, each diffeomorphic to a $3$-sphere. }
\label{figure:4dblow}
\end{figure}

Observe that $Y$ is a compact, oriented $4$-manifold  which comes with an action of 
the semi-direct product $L[2]\rtimes\SL(L)$, where $L[2]$ acts on $A$ by translations.  The boundary $\partial Y$ of $Y$ 
has $16$ connected  components, naturally labeled by 
elements of $A[2]$; so $\partial Y=\coprod_{\alpha\in A[2]}\partial_\alpha Y$.  The components of $\partial Y$ are 
permuted simply transitively by $\Fb_2\otimes L$ (acting as a translation group), and each component $\partial_\alpha Y$ 
is diffeomorphic to $S^3$.  Since $T_A$ is a diffeomorphism it lifts to a  diffeomorphism
\[
T_Y :Y\to Y.
\]
that permutes the $16$ components of $\partial Y$ according to the induced action of $T$ on $A[2]$.
If  $T\in\GL(L)$ is congruent to the identity modulo $2$, the diffeomorphism $T_Y$ fixes each boundary component $\partial_\alpha Y\cong S^3$, and acts on it as at the antipodal map.  

\para{Step 2 (Extension to a collar neighborhood):}  Let $Y'$ be obtained by glueing on  $Y$ 
an {\em external collar} along its boundary,  by which we mean that we add a product 
\[[0,1]\times \partial Y\cong [0,1]\times L[2]\times S^3\] and identify $\{0\}\times \partial  Y$  with $\partial Y\subset Y$ in the obvious manner.  This  $Y'$ is diffeomorphic to $Y$ and the inclusion $Y\subset Y'$ is a strong $L[2]\rtimes \SL(L)$-equivariant deformation retract.  The action of $L[2]\rtimes \SL(L)$ 
on $Y$ extends to $Y'$ in the obvious way.  See Figure \ref{figure:collar}.

\begin{figure}[h]
\begin{center}
\includegraphics[scale=0.3]{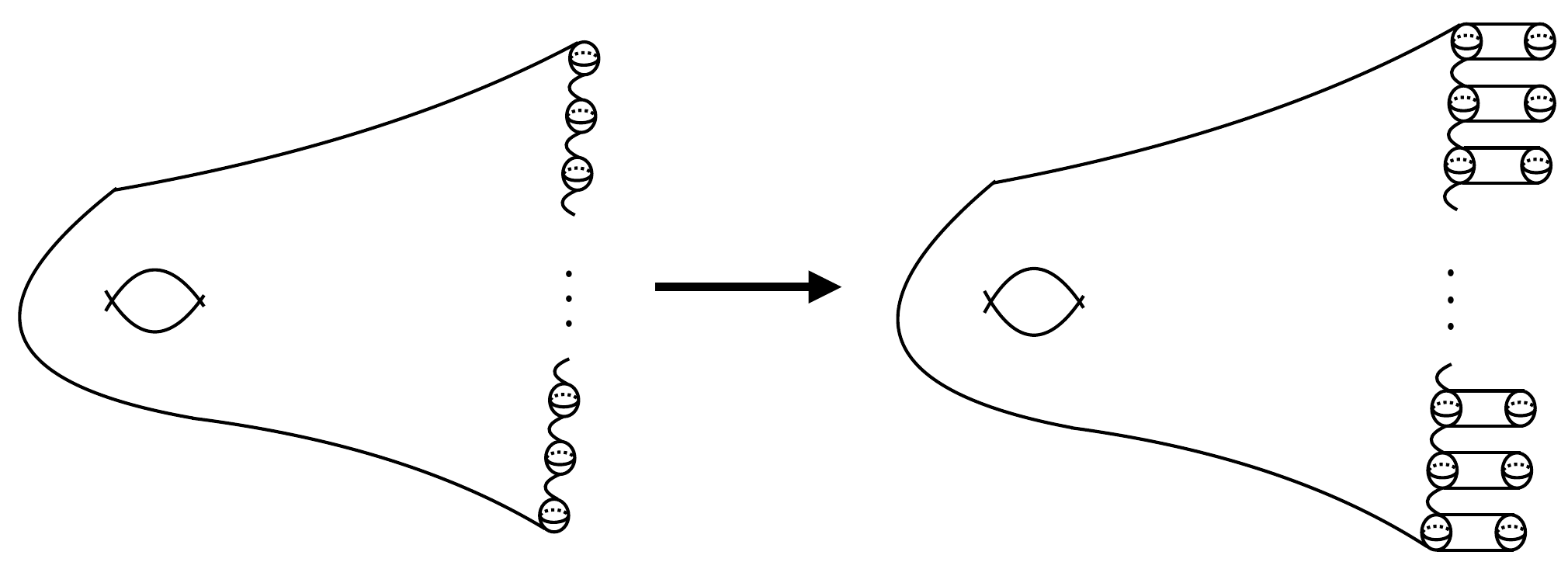}
\end{center}
\caption{\small Adding a collar to each of the $16$ boundary components 
of $Y$ to obtain the diffeomorphic manifold $Y'$. Note that the manifold $Y$ in this figure is the same as (but a different visualization of) 
the manifold $Y$ in Figure \ref{figure:4dblow}.}
\label{figure:collar}
\end{figure}

Fix a complex structure $J$ on $V$, so that we have defined the  group 
$\GL(V,J)$.   Choose a path $\gamma_T:[0,1]\to \SL(V)$ from  $T$ to  an element of $\GL(V,J)$ that is constant near $0$ and near $1$ (\footnote{We emphasize that this is the exact place where we assume that $T\in\SL(L)$ and not just in $\GL(L)$.}). Lemma \ref{lemma:fundgrp} implies that  the space of all such paths is arcwise connected. This is important in what follows.
We then extend $T_Y$ in an $L[2]\rtimes \SL(L)$-equivariant manner to a diffeomorphism 
\[T_{Y'}:Y'\to Y'\]
  by setting 
$T_{Y'}|Y:=T_{Y}$  and defining $T_{Y'}$ on each 
$[0,1]\times S^3$ collar about each component of $\partial Y$ by executing $\gamma_T(t)$ on $\{t\}\times S^3$.  More precisely, define $T_{Y'}$ on $[0,1]\times \partial Y\cong [0,1]\times A[2]\times  \partial_0Y$ to be :
\[
T_{Y'}(t, \alpha, z):=(t, T[2](\alpha),\gamma_T(t)(z))
\]
for all $(t, \alpha, z)\in [0,1]\times  A[2]\times \partial_0 Y$.  Note that 
$T_{Y'}$ is a diffeomorphism; it acts acts on a neighborhood of $\partial Y'$ through its action on $A[2]$; and it acts as the identity near $\partial Y'$.

\para{Step 3 (Hopf collapse):} 
Performing the Hopf collapse on each of the $16$ boundary components of $Y'$ gives a smooth map $h:Y'\to \overline Y'$ to a closed manifold $\overline Y'$.  It is straightforward to check that this manifold is diffeomorphic to the complex blowup $\Bl_{A[2]}(A)$, where the image under $h$ of each boundary component of $Y'$ is an exceptional divisor in 
$\overline{Y}'\cong\Bl_{A[2]}(A)$.  Since, as noted in Step 2, the diffeomorphism $T_{Y'}$ acts (up to permutation) as the identity on a neighborhood of  the  boundary of $Y'$, it follows in particular that $T_{Y'}$ respects the fibers of the Hopf collapse and so descends to a diffeomorphism 
\[\
T_{\overline Y'}: \overline Y'\to \overline Y'.
\]
The action of $T_{\overline Y'}$ on $16$ exceptional divisors of $T_{\overline Y'}$ is still  through its action on $A[2]$.

\para{Step 4 (The involution):}
The transformation $-{\rm I}\in\SL(L)$ is complex-linear  and defines  an involution $\iota_{\overline Y}$ on $Y$. This extends to an involution  $\iota_{\overline Y}$ of  $\overline Y$ (on the collar it will be product of $1_{[0,1]}\times \iota_Y$), which  in turn descends to an involution 
$\iota_{\overline Y'}$ of $\overline Y'$.   Let 
\[
M:=\overline Y'/\la \iota_{\overline Y'}\ra
\]
be the orbit space.   Note that $M$ is diffeomorphic with the Kummer manifold $\Kum(A)$ and hence is a  K3 manifold. Since $T_{\overline Y'}$ commutes with $\iota_{\overline Y'}$ it induces a diffeomorphism 
\[f_T:M\to M.\]
This is the claimed diffeomorphism.

\begin{remark}[{\bf Naturality}]
\label{remark:naturality}
The diffeomorphism $f_T:M\to M$ thus obtained depends on the choice of the path 
$\gamma_T$ in $\GL(V)^+$ from  the identity to an element of 
$\GL(V, J)$,  which we 
assumed to be constant near its end points. 
Another choice $\gamma'_T$  produces 
another diffeomorphism $f'_T$. However, by Lemma \ref{lemma:fundgrp} 
$\gamma'_T$ and $\gamma_T$ will be in the same arcwise connected component of such paths. This implies that  $f'_T:M\to M$
is isotopic with $f_T$. It follows that  our construction defines an embedding of $\SL(L)$ into the mapping class group $\Mod(M)$ of $M$.
\end{remark}

\begin{remark}[{\bf Isotopic but nonconjugate entropy minimizers}]
\label{remark:nonconjugate}
We remark that varying the choice of path $\gamma_T\subset\GL(V)^+$ gives uncountably many diffeomorphisms $f_T$, each an entropy minimizer, each isotopic to the others, but no two conjugate in $\Diff(M)$.  
To see this, first note that smooth conjugacy leaves invariant hyperbolic sets, and so leaves invariant the complement of the hyperbolic set. In our case the latter is a neighborhood of sixteen $2$-spheres in $M$.  

Now for any $n\geq 1$ one can choose the path $\gamma_T$ in Step 2 of the construction of $f_T$ to be a path $\beta_n\subset\GL(V)^+$ that is the identity matrix on precisely $n$ time intervals.  Even though smooth conjugacy does not {\it a priori} preserve the ``time direction'' on the (quotient of) the collar around a $2$-sphere, the number $n$ is a conjugacy invariant because it is the minimal number of distinct path components on which the diffeomorphism is the identity map.  

This gives countably many nonconjugate $f_T$. A slightly more elaborate construction gives uncountably many.
\end{remark}

\subsection{First properties of $f_T$}
\label{subsection:firstprop}

In this section we verify Properties 3 and 4 of Theorem \ref{theorem:main1}.  We assume here that $L$ has been oriented, which means that we have singled out a generator of $\wedge^4 L$. This determines a quadratic form $\delta: \wedge^2 L\to \wedge^4L\cong \Zb$ defined by $\delta(\alpha)=\det(\alpha\wedge\alpha)$. It is nondegenerate and has signature $(3,3)$. We use the same symbol for its $\Rb$-linear extension to
$\wedge^2_\Rb V$. It is a classical fact that any isotropic line in  $\wedge^2_\Rb V$ is the $\wedge^2(P)$ for some $2$-plane $P\subset V$.

\para{Proof of Property 3 of Theorem \ref{theorem:main1}: }
We proved in \S\ref{subsect:complexcase} that if $T$ preserves a complex structure on $V$, that is if there exists $J\in \SL(V)$ with $J^2=-1$ and $JT=TJ$, then $f_T$ is a biholomorphic automorphism of the associated Kummer surface.  In that case the $+\sqrt{-1}$-eigenspace $V^{1,0}(J)$ of $J$ in $V_\Cb$ has complex dimension $2$ and $V^{2,0}(J):=\wedge^2_{\Cb}V^{1,0}(J) $ 
is a complex  line in  $(\wedge^2 L)_\Cb$, invariant under $T$. This line is isotropic  for the  $\Cb$-bilinear extension of $\delta$ and positive-definite for its hermitian extension.  This accounts for the summand $H^{2,0}(M)$ in the Hodge decomposition of $H^2(M; \Cb)$.

To see the converse, first note that the Kummer construction produces an embedding of $\wedge^2 L\to H^2(M; \Zb)$ for which 
\[
T\in \SL(L)\mapsto (f_{T}^*)^{-1}\in \Orth(H_M)
\]
 is equivariant. If  $f_T$ respects a complex structure of $M$ then $f^*_T$ preserves the associated Hodge decomposition. The subspace  
 $H^{2,0}(M)\oplus H^{0,2}(M)$ is the complexification of a real, oriented,  positive-definite plane $\Pi\subset H_M\otimes \Rb$ that must be  a sum of eigenspaces of $f_T^*$.
 Since  $\wedge^2V$ is $f^*_T$-invariant and has negative-definite orthogonal complement,  $\Pi$ projects isomorphically 
 onto a positive-definite oriented plane $\Pi'$ in  $\wedge^2V$ that is also $f^*_T$-invariant. Such  a plane  $\Pi'$ defines a Hodge structure on 
$\wedge^2V$ (with  $(\wedge^2V)^{2,0}$ being the orthogonal projection of  $H^{2,0}(M)$ in $\wedge^2V_\Cb$). It is well-known (and easy to show) that  such  a Hodge 
structure  comes from a complex structure $J$ on $V$ (with the $\wedge_\Cb^2$ of its $\sqrt{-1}$ eigenspace yielding $(\wedge^2V)^{2,0}$) and that $J$  is unique up to sign. 
This uniqueness implies that $T$ will commute with $J$. Since $T$ has four distinct real eigenvalues,  the corresponding eigenlines are preserved by $J$, a contradiction. Thus $f_T$ cannot respect any complex structure on $M$.  
 
\para{Proof of Property 4 of Theorem \ref{theorem:main1}: }
When $T\in\SL(L)$ is {\em Anosov}, that is, when $T$ has all eigenvalues off the unit circle, then the eigenvalues inside (resp.\ outside)  
the unit circle determine a $T$-invariant decomposition of $V$ into two nonzero subspaces: a subspace  
$V_-$ on which $T$ is contracting and a subspace $V_+$  on which it is expanding. The images of the translates of $V_{\pm}$ 
under the  projection  $V\to A$  give two complementary $T_A$-invariant foliations on $A$.  Each of these foliations comes equipped with a transverse measure given by integrating against the closed differential form defining it, and $T_A$ leaves invariant the projective class of each of these measures.  The diffeomorphism $T_A$ is a linear Anosov diffeomorphism.

As noted in Step 1 of \S\ref{subsection:construct}, $T_A$ extends to a diffeomorphism $T_Y$ of the real-oriented blowup $Y:=\RBl_{A[2]}(A)$.  A $2$-dimensional visualization of the corresponding $T_Y$-invariant foliations on $Y$ is given in Figure \ref{figure:open}.

\begin{figure}[h]
     \centering
     \begin{subfigure}{0.45\textwidth}
         \centering
        \includegraphics[scale=0.2]{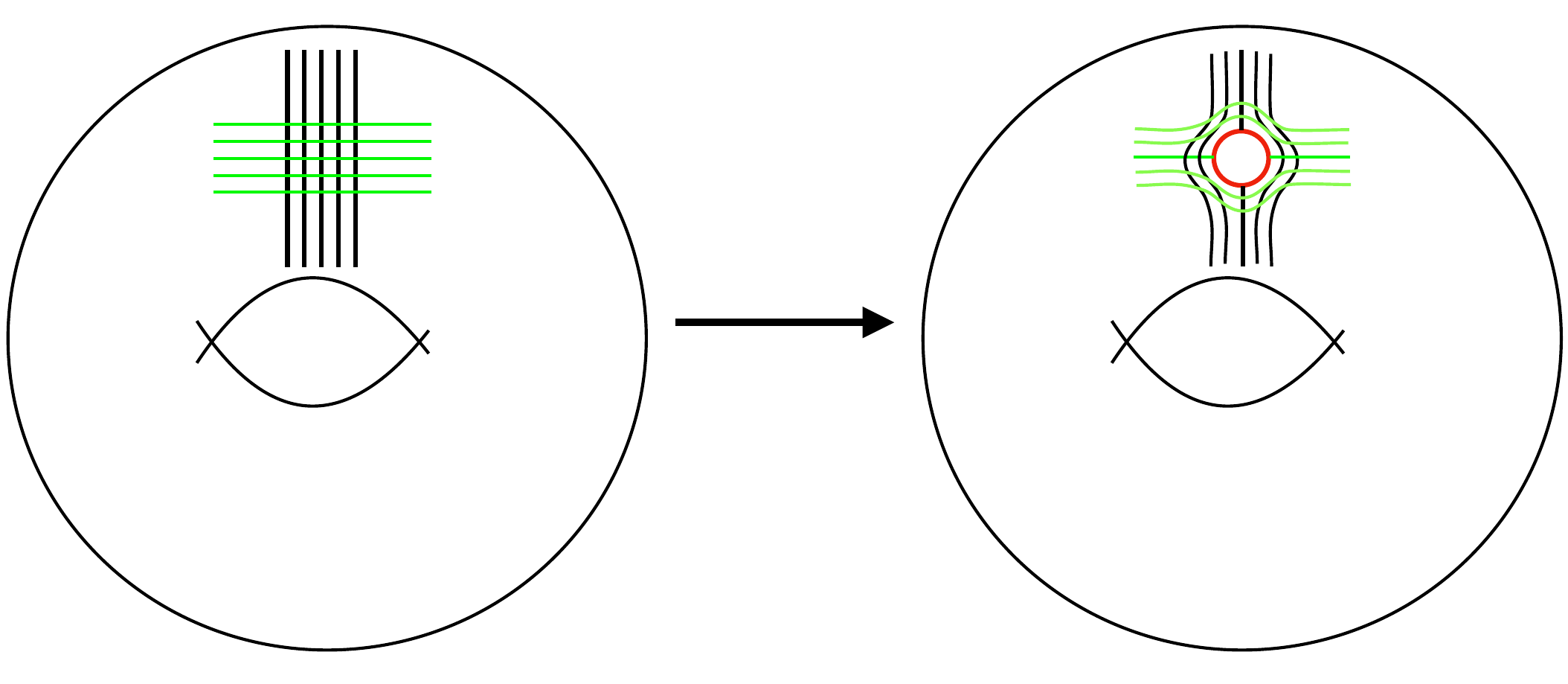}
         \caption{The effect of real-oriented blow-up on a pair of transverse foliations.}
       
     \end{subfigure}
     \begin{subfigure}{0.45\textwidth}
         \centering
         \includegraphics[scale=0.2]{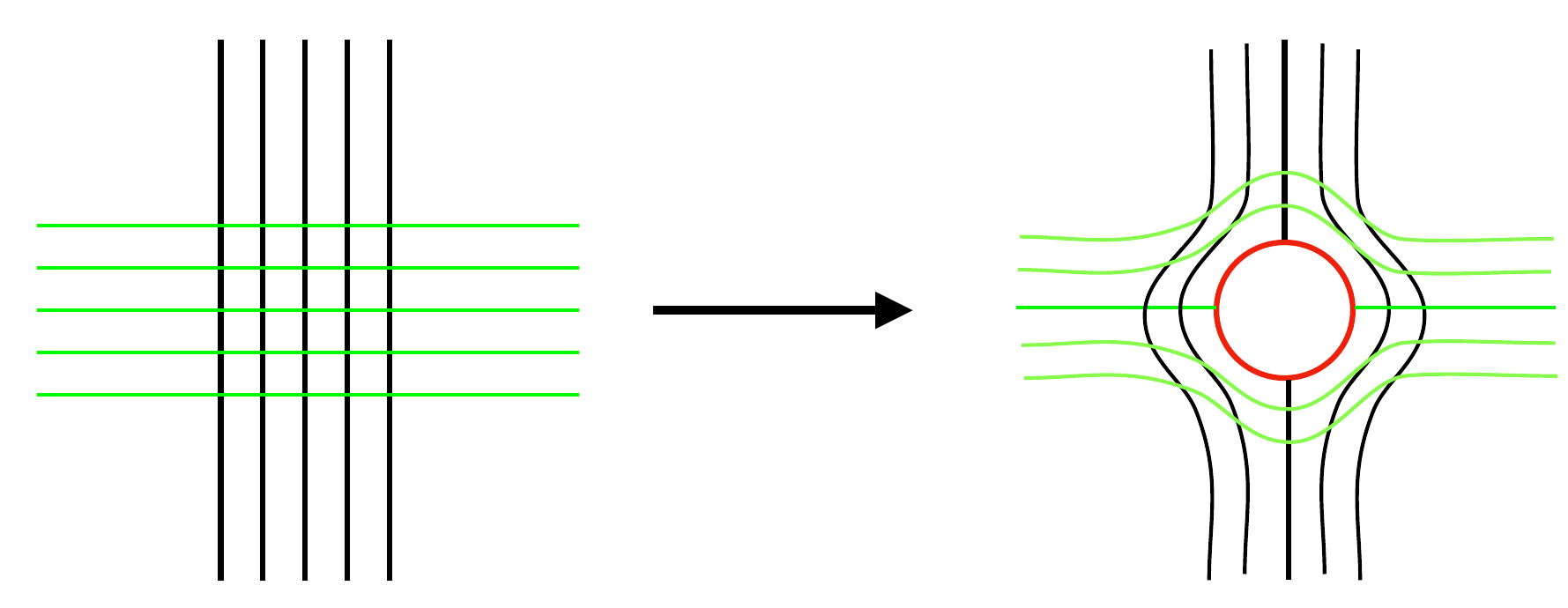}
         \caption{A zoom-in of (a).}
              \end{subfigure}
     \caption{}
     \label{figure:open}
\end{figure}

By construction, $T_Y$ preserves a neighborhood of $\partial Y$ (which is a union of sixteen $3$-spheres), and is Anosov in the complement of this neighborhood.  In fact more is true: by choosing the speed of the path $\gamma_T\subset\SL(V)$ one can make this neighborhood as small as one wants in that it can be taken to lie inside any given sufficiently 
small neighborhood of $\partial Y$.  These properties survive the Hopf collapse and the involution.  We note that all the properties of $f_T$ claimed in Theorem \ref{theorem:main1} hold for $f_T$ independently of the choice of the path $\gamma_T$ used to construct it.  

The previous paragraph shows that $f$ is what we call  {\em of pseudo-Anosov type}: for any neighborhood $N$ of the union of the sixteen $(-2)$ rational curves in $M=\Kum(A)$ there is a choice of $\gamma_T$ so that $f_T$ is Anosov in $M-N$.  

\subsection{Homological spectral radius of $f_T$}

In this subsection we compute the spectral radius of the action 
induced by $f_T:M\to M$ on $H_2(M)$.  Recall that the {\em spectral radius $\specrad(A)$} of a linear transformation $A$ of a finite-dimensional vector space is the maximal absolute eigenvalue of $A$.

\begin{proposition}[The spectral radius of $f_{T*}$]
\label{proposition:specrad}
Let $M$ be the K3 manifold.  Let $T\in\SL(L)$, and denote the eigenvalues of $T$ by $\lambda_1,\ldots,\lambda_4$.  Let $f_T\in\Diff(M)$ denote the diffeomorphism constructed in \S\ref{subsection:construct} above.  The induced action $f_{T*}:H_2(M)\to H_2(M)$ satisfies
\[\specrad(f_{T*})=\max\{|\lambda_i\lambda_j|:1\le i<j\le 4\}.\] 
\end{proposition}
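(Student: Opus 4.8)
The plan is to compute $H_2(M;\Rb)$ and the action of $f_{T*}$ on it explicitly, using the Kummer construction step by step, and then read off the spectral radius. The key point is that $M = \overline{Y}'/\langle\iota_{\overline Y'}\rangle$ and that the action of $f_T$ on $M$ lifts to $T_{\overline Y'}$ on $\overline Y' \cong \Bl_{A[2]}(A)$, which in turn is built from $T_Y$ on $Y = \RBl_{A[2]}(A)$ and hence from the linear map $T_A$ on the $4$-torus $A = V/L$. So I would first recall the standard computation of $H^2(\Kum(A);\Rb)$: it decomposes (rationally, or after tensoring with $\Rb$) as the $\iota$-invariant part of $H^2(\Bl_{A[2]}(A);\Rb)$, which is $H^2(A;\Rb)^{\iota} \oplus \bigl(\bigoplus_{\alpha\in A[2]} \Rb\cdot[E_\alpha]\bigr)$. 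Since $\iota = (-\mathrm{I})_A$ acts trivially on $H^2(A;\Rb) = \wedge^2 V^*$ (even degree), the first summand is all of $\wedge^2_\Rb V^* \cong \Rb^6$; the second summand is the span of the sixteen exceptional classes, a copy of $\Rb^{16}$. This gives $\dim H^2(M;\Rb) = 6 + 16 = 22$, consistent with $H_M \cong \Zb^{22}$, and the decomposition is $f_T^*$-equivariant.

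Next I would identify the action of $f_T^*$ on each summand. On the $\Rb^{16}$ spanned by the exceptional divisors $[E_\alpha]$, the diffeomorphism $f_T$ acts by permuting the $E_\alpha$ according to the induced action of $T$ on $A[2] = \frac12 L/L \cong \Fb_2\otimes L$ — this is exactly what Steps 1–3 of the construction record ("acts through its action on $A[2]$"). A permutation matrix has spectral radius $1$, so this summand contributes nothing beyond $1$ to $\specrad(f_{T*})$. On the $\Rb^6 = \wedge^2_\Rb V^*$ summand, $f_T^*$ acts as $\wedge^2 (T^*) = \wedge^2(T)$ up to the duality/inverse bookkeeping already set up in the excerpt ($T \mapsto (f_T^*)^{-1}$ is the equivariant map into $\Orth(H_M)$), and in any case the spectral radius of $\wedge^2(T)$ equals that of $\wedge^2(T^{-1})^{\pm 1}$ since the eigenvalues come in the same multiset up to inversion, and what matters is only absolute values. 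The eigenvalues of $\wedge^2(T)$ are precisely the products $\lambda_i\lambda_j$ for $1\le i<j\le 4$, so $\specrad$ on this summand is $\max\{|\lambda_i\lambda_j| : i<j\}$. Since $\prod_i \lambda_i = \det T = 1$ and all $|\lambda_i|$ are real-positive-or-come-in-conjugate-pairs, one has $\max_{i<j}|\lambda_i\lambda_j| \ge 1$ (e.g.\ it is at least $|\lambda_1\lambda_2|$ where $|\lambda_1|,|\lambda_2|$ are the two largest moduli, and this is $\ge 1$ since $|\lambda_1\lambda_2|\ge|\lambda_3\lambda_4|$ and their product is $1$). Hence the permutation summand never dominates, and $\specrad(f_{T*}) = \max\{|\lambda_i\lambda_j| : 1\le i<j\le 4\}$.

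The main obstacle, and the step I would spend the most care on, is justifying rigorously that $f_T^*$ really does act as the linear map $\wedge^2(T)$ on the $\wedge^2_\Rb V^*$ summand. The subtlety is that $f_T$ is not $T_A$ itself but the result of the four-step surgery involving the path $\gamma_T$ and the Hopf collapse, so one must check that these modifications do not alter the action on $H^2$. Here I would use two facts: (i) the inclusion $Y \hookrightarrow Y'$ is an $\SL(L)$-equivariant deformation retract and $Y$ itself deformation retracts (equivariantly, away from a neighborhood of $A[2]$) onto the complement of the blown-up points, so $H^2$ of everything in sight is controlled by $H^2(A;\Rb) = \wedge^2 V^*$ together with the exceptional/boundary classes; and (ii) the map $T_{\overline Y'}$ restricted to this "torus part" is homotopic to the lift of $T_A$, because the path $\gamma_T$ only modifies the map in a collar of the exceptional set, which contributes only to the $[E_\alpha]$ summand. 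Concretely: the long exact sequence of the pair (or the Mayer–Vietoris decomposition $M = (\text{neighborhood of the }16\text{ spheres}) \cup (\text{Anosov part})$, using Property 4) splits $H^2(M;\Rb)$ $f_T^*$-equivariantly into the two pieces above, with the Anosov part carrying precisely the $\wedge^2 T$-action inherited from $A$. Then invoking the descent to the $\iota$-quotient (which is harmless on $H^2$ since $\iota^*$ is trivial on $\wedge^2 V^*$ and, after a possible sign, trivial on the exceptional classes as well — here one may need the fact that $\iota$ fixes each $E_\alpha$ and acts on it, so $[E_\alpha]$ is $\iota$-invariant), the computation is complete. I would present this equivariant splitting carefully, since it is the only genuinely non-formal input.
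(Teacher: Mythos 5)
Your proposal is correct and follows essentially the same route as the paper: decompose $H_2(M;\Qb)$ (up to finite index) into the span of the sixteen exceptional classes plus a copy of $\wedge^2 L$, observe that $f_{T*}$ acts by a permutation on the former and by $\wedge^2 T$ on the latter, and read off the spectral radius from the eigenvalues $\lambda_i\lambda_j$. Your extra care in checking that the permutation summand never dominates (via $\det T=1$) and in justifying that the surgery in the collar does not disturb the $\wedge^2 T$-action on the torus summand are worthwhile refinements of details the paper leaves implicit, but the underlying argument is the same.
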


\begin{proof}
For simplicity let $H_i(M):=H_i(M;\Zb)$.  Since $M$ is a simply-connected, 
closed, oriented $4$-manifold, it follows from Poincar\'{e} Duality and the Universal Coefficients Theorem that $H_1(M)=H_3(M)=0$ and $H_0(M)\cong H_4(M)\cong\Zb$. Note that $f_T$ is orientation-preserving by construction (\footnote{In fact any homeomorphism of $M$ is orientation-preserving, but we will not need this.}).  Thus 
\[
\specrad(f_{T*})=\specrad(f_{T*}|_{H_2(M)}).
\]  

The images in $\Kum(A)$ of the $16$ exceptional divisors in $\Bl_{A[2]}(A)$ are $(-2)$ vectors; they generate a rank $16$, negative-definite sublattice $W\subset H_2(M)$.  The orthogonal complement of this sublattice has rank $6$, signature $(3,3)$, and contains with finite index a copy of $\wedge^2L$ coming from the classes of $H_2(A)$; see for example \S 3 of \cite{LP}.  The rank $22$ sublattice 
$W\oplus \wedge^2L$ of $H_2(M)$ has finite index in $H_2(M)$, and so 
its rational span in $H_2(M;\Qb)$ is all of $H_2(M;\Qb)$; that is:
\begin{equation}
\label{eq:sum1}
H_2(M;\Qb)=(W\otimes\Qb)\oplus \wedge^2(L\otimes\Qb).
\end{equation}

Of course the eigenvalues of $(f_T)_*$ on $H_2(M)$ are equal to those of $(f_T)_*$ on $H_2(M;\Qb)$, which we now compute.   

Any $T\in\SL(L)$ acts as a permutation on the set $L[2]$ of $2$-torsion points, hence acts as a permutation on the $16$ exceptional divisors of $\Bl_{A[2]}(A)$, hence as a permutation representation on $W$.  It follows that the eigenvalues of the restriction of $(f_T)_*$ to the invariant 
summand $W$ of $H_2(M;\Qb)$ have absolute value equal to $1$.  On the other hand, since $(f_T)_*$ acts on the other invariant summand 
$\wedge^2(L\otimes \Qb)$ via the representation $\wedge^2(L\otimes\Q)$ of $T$, it follows that 
its eigenvalues on this summand are precisely $|\lambda_i\lambda_j|$ with $1\le i<j\le 4$.  Thus 
\[\specrad(f_{T*})=\max\{|\lambda_i\lambda_j|:1\le i<j\le 4\}\]
as asserted earlier.
\end{proof}

Proposition \ref{proposition:specrad} gives a lower bound on the entropy of any $g\in\Diff(M)$ homotopic to $f_T$.

\begin{corollary}
\label{corollary:Yomdin}
Let $T\in\SL(L)$ have eigenvalues $\lambda_i$ ordered so that 
$|\lambda_1|\geq \cdots \geq |\lambda_4|$.  Let $f_T\in\Diff(M)$ be as above.  For any $g\in\Diff(M)$ homotopic to $f_T$ the following lower bound holds:
\[\htop(g)\geq \log|\lambda_1|+\log|\lambda_2|.\]
\end{corollary}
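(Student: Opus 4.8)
The plan is to derive this immediately from Proposition \ref{proposition:specrad} together with Yomdin's theorem (the $C^\infty$ case of Shub's Entropy Conjecture), using only the elementary observation that the maximum of $|\lambda_i\lambda_j|$ over pairs is attained at the two eigenvalues of largest modulus.

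First I would record that since $g$ is homotopic to $f_T$, the induced maps on homology agree: $g_*=(f_T)_*$ on $H_*(M;\Rb)$. Next, as observed in the proof of Proposition \ref{proposition:specrad}, the fact that $M$ is a closed, simply-connected, oriented $4$-manifold forces $H_1(M)=H_3(M)=0$ and $H_0(M)\cong H_4(M)\cong\Zb$, and $f_T$ (hence $g$) acts as the identity on $H_0$ and $H_4$ since it is orientation-preserving of degree $1$. Therefore $\specrad\big(g_*\colon H_*(M;\Rb)\to H_*(M;\Rb)\big)=\specrad\big((f_T)_*|_{H_2(M;\Rb)}\big)$. By Proposition \ref{proposition:specrad} this last quantity equals $\max\{|\lambda_i\lambda_j|:1\le i<j\le 4\}$, and because the eigenvalues are ordered with $|\lambda_1|\ge|\lambda_2|\ge|\lambda_3|\ge|\lambda_4|$, this maximum is precisely $|\lambda_1\lambda_2|$.

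Finally I would invoke Yomdin's theorem: for any $C^\infty$ self-map $g$ of a closed manifold $M$ one has
\[
\htop(g)\ \ge\ \log\specrad\big(g_*\colon H_*(M;\Rb)\to H_*(M;\Rb)\big).
\]
Since $g\in\Diff(M)$ is in particular $C^\infty$ and $M$ is closed, this applies, and combining it with the computation above gives
\[
\htop(g)\ \ge\ \log|\lambda_1\lambda_2|\ =\ \log|\lambda_1|+\log|\lambda_2|,
\]
as claimed. I do not expect any genuine obstacle here: the only points requiring care are the bookkeeping that the homological spectral radius is concentrated in degree $2$ (which is already established in Proposition \ref{proposition:specrad}) and the trivial remark that ordering the eigenvalues by modulus makes $|\lambda_1\lambda_2|$ the largest pairwise product. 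The substantive input — Yomdin's theorem — is cited, not proved.
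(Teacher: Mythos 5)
Your proposal is correct and follows essentially the same route as the paper: invoke Yomdin's theorem for the lower bound by the homological spectral radius, use homotopy invariance to replace $g_*$ by $(f_T)_*$, and apply Proposition \ref{proposition:specrad} to identify that spectral radius as $|\lambda_1\lambda_2|$. The extra bookkeeping you include (vanishing of $H_1$ and $H_3$, trivial action on $H_0$ and $H_4$, and the observation that the maximal pairwise product is attained at the two largest moduli) is implicit in the paper's one-line deduction and is harmless.
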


\begin{proof}
Yomdin \cite{Y} proved that any $g\in\Diff(M)$ satisfies Shub's Entropy Conjecture: 
\[
\htop(g)\geq \log\specrad(g_*:H_*(M)\to H_*(M))
\]
Since $g$ is homotopic to $f_T$ it follows that $g_*=f_{T*} :H_2(M)\to H_2(M)$. The corollary now follows from Proposition \ref{proposition:specrad}.
\end{proof}

\section{Computing the entropy of $f_T$}
\label{section:dynamical}

The goal of this section is to establish Property 1 of Theorem \ref{theorem:main1}

\subsection{General results on entropy}
\label{subsection:entropy}

In this subsection we state some of the foundational results on entropy that we will later need.  The main references we use are Bowen \cite{B} and Adler-Konheim-McAndrew \cite{AKM} 

It will be convenient to use the characterization of entropy for compact metric spaces as given in  \S 1 of Bowen \cite{B}. It is defined as follows. Let $T:X\to X$ be a uniformly continuous map on a compact metric space $(X,d)$.  Let $K\subset X$ be compact.   A subset $E\subseteq K$ is said to {\em $(n,\epsilon)$-span} $K$ (with respect to $T$) if for each  $x\in K$ there exists $y\in E$ so that 
$d(T^j(x),T^j(y)\leq \epsilon$ for all $0\leq j< n$. Let $r_n(\epsilon,K)$ be the smallest cardinality of any set $(n,\epsilon)$-spanning $K$.  Define the 
{\em entropy} $h_d(T,K)$ of $T$ relative to $K$ to be
\[
h_d(T,K):=\lim_{\epsilon\to 0}\ (\ \limsup_{n\to\infty}\tfrac{1}{n}\log r_n(\epsilon,K)\ ).
\]
The entropy $h_d(T)$ on $X$ is defined as
\[
h_d(T):=\sup_{K\subseteq X \ \text{compact}}h_d(T,K).\]
Bowen proves in \cite{B} results on the entropy $h_d(T)$ of self-maps of metric spaces $(M,d)$.  For compact manifolds, which is all we consider at each stage, this entropy  coincides with the  topological  entropy; see the remark after Proposition 3 on page 403 of \cite{B}.  That is:
\[\htop(T)=h_d(T).\]

\begin{lemma}
\label{lemma:relative1}
 Let $T:X\to X$ be a uniformly continuous map on a compact metric space $(X,d)$.  Let $K\subset X$ be compact.   If $K$ is invariant then 
 \begin{equation}
\label{eq:relative1}
h_d(T,K)=h_d(T|K).
\end{equation}
\end{lemma}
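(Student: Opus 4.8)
The plan is to unwind the two definitions and show they compute the same quantity, the only genuine work being a monotonicity statement for relative entropy under inclusion of compacta. First I would record that invariance of $K$ (i.e.\ $T(K)\subseteq K$) makes $T|K\colon K\to K$ a well-defined self-map of the compact metric space $(K,d)$, automatically uniformly continuous. Next comes the key observation: for any $n\in\Nb$, any $\epsilon>0$, and any subset $E\subseteq K$, the assertion ``$E$ $(n,\epsilon)$-spans $K$ with respect to $T$'' is \emph{literally identical} to ``$E$ $(n,\epsilon)$-spans $K$ with respect to $T|K$'', since for $x\in K$ one has $T^j(x)=(T|K)^j(x)$ for all $j\ge 0$ and the metric is unchanged. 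Hence the quantity $r_n(\epsilon,K)$ is the same whether computed for $T$ or for $T|K$, and therefore
\[
h_d(T,K)=h_d(T|K,\,K).
\]

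It remains to compare $h_d(T|K,K)$ with $h_d(T|K)=\sup_{K'\subseteq K\text{ compact}}h_d(T|K,K')$. One inequality is immediate: $K$ is itself a compact subset of $K$, so $h_d(T|K)\ge h_d(T|K,K)=h_d(T,K)$. For the reverse inequality I would invoke the standard monotonicity of relative entropy: if $K'\subseteq K$ then $h_d(T|K,K')\le h_d(T|K,K)$. Concretely, given $E\subseteq K$ that $(n,\epsilon)$-spans $K$, for each $y\in E$ whose Bowen $(n,\epsilon)$-ball meets $K'$ choose one point $y'\in K'$ inside that ball; the resulting subset of $K'$ then $(n,2\epsilon)$-spans $K'$, so $r_n(2\epsilon,K')\le r_n(\epsilon,K)$. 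Passing to $\limsup_n \tfrac1n\log(\cdot)$ and then $\epsilon\to 0$ gives $h_d(T|K,K')\le h_d(T|K,K)$ (this is exactly Bowen's monotonicity lemma in \cite{B}). Taking the supremum over compact $K'\subseteq K$ yields $h_d(T|K)\le h_d(T|K,K)=h_d(T,K)$, and combining the two inequalities proves \eqref{eq:relative1}.

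The whole argument is a careful matching of definitions; the only point requiring an actual (routine) estimate is the monotonicity $h_d(T|K,K')\le h_d(T|K,K)$, which is where the factor-of-two enlargement of $\epsilon$ and the ``push a spanning set into $K'$'' trick enter. I expect this to be the main — indeed essentially the only — obstacle, and it is dispatched by the standard Bowen argument sketched above. (Note that uniform continuity of $T$ is used only to guarantee, via $K$ compact, that $T|K$ is continuous, hence uniformly continuous, so that $h_d(T|K)$ is defined in the first place.)
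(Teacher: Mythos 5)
Your argument is correct and follows essentially the same route as the paper: the first step (that invariance of $K$ makes the $(n,\epsilon)$-spanning condition for $T$ and for $T|K$ literally coincide, so $h_d(T,K)=h_d(T|K,K)$) is identical, and the second step establishes $h_d(T|K,K)=h_d(T|K)$, which the paper simply cites as Bowen's remark that $h_d(S,Y)=h_d(S)$ for $Y$ the whole compact space. The only difference is that you supply the (correct) monotonicity argument with the $2\epsilon$ trick in place of that citation.
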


\begin{proof}
First note that the definition of $h_d(T,K)$ depends only on the $T$-orbits of points in $K$.  Since $K$ is $T$-invariant, each such $T$-orbit lies in $K$, 
so that $h_d(T,K)=h_d(T|K,K)$.  As remarked at the top of Page 403 of \cite{B}, for any compact metric space $(Y,d)$ it holds that 
$h_d(T,Y)=h_d(T)$. Combining these two observations gives the lemma.
\end{proof}

We will need the following results on topological entropy.

\begin{proposition}[=Thm.\ 5 of \cite{AKM}: $\htop$ of quotients]
\label{prop:AKM5}
Let $X$ be a compact topological space and let $\pi: X\to \overline X$ be  a (continuous) quotient map (so defined by some equivalence relation on $X$).
If $\psi:X\to X$ is a continuous self-map which  descends to
$\overline{\psi}: \overline X\to  \overline X$, then $\htop( \overline \psi)\leq \htop(\psi)$.  
 \end{proposition}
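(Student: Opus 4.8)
The statement is Theorem~5 of \cite{AKM}, so the task is really to recall why the topological entropy cannot increase under a continuous surjection that intertwines the two dynamical systems. The plan is to argue directly from the definition of topological entropy via open covers (the Adler--Konheim--McAndrew definition), using the fact that the preimage under a continuous map of an open cover of $\overline X$ is an open cover of $X$, and that this pullback operation is compatible with the joins $\bigvee_{i=0}^{n-1}\psi^{-i}\mathcal{U}$ that enter the definition of $\htop$.

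First I would fix notation: for an open cover $\mathcal{U}$ of a compact space, write $N(\mathcal{U})$ for the minimal cardinality of a subcover, $H(\mathcal{U}):=\log N(\mathcal{U})$, and recall that $\htop(\psi)=\sup_{\mathcal{U}}\lim_{n\to\infty}\tfrac1n H\bigl(\bigvee_{i=0}^{n-1}\psi^{-i}\mathcal{U}\bigr)$, the supremum being over all open covers $\mathcal{U}$ of $X$. The key elementary observations are: (i) if $\overline{\mathcal{V}}$ is an open cover of $\overline X$ then $\pi^{-1}\overline{\mathcal{V}}:=\{\pi^{-1}(V):V\in\overline{\mathcal{V}}\}$ is an open cover of $X$ because $\pi$ is continuous and surjective; (ii) $\pi^{-1}$ commutes with finite joins, i.e. $\pi^{-1}\bigl(\overline{\mathcal{V}}_1\vee\overline{\mathcal{V}}_2\bigr)=\pi^{-1}\overline{\mathcal{V}}_1\vee\pi^{-1}\overline{\mathcal{V}}_2$; (iii) because $\pi\circ\psi=\overline{\psi}\circ\pi$, one has $\psi^{-1}\pi^{-1}\overline{\mathcal{V}}=\pi^{-1}\overline{\psi}^{-1}\overline{\mathcal{V}}$; and (iv) $N(\pi^{-1}\overline{\mathcal{V}})\le N(\overline{\mathcal{V}})$, since pulling back a finite subcover of $\overline{\mathcal{V}}$ gives a subcover of $\pi^{-1}\overline{\mathcal{V}}$ of the same cardinality (here surjectivity of $\pi$ is again used to ensure the pulled-back sets still cover). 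Combining (ii), (iii) gives $\bigvee_{i=0}^{n-1}\psi^{-i}\pi^{-1}\overline{\mathcal{V}}=\pi^{-1}\bigl(\bigvee_{i=0}^{n-1}\overline{\psi}^{-i}\overline{\mathcal{V}}\bigr)$, and then (iv) yields $H\bigl(\bigvee_{i=0}^{n-1}\psi^{-i}\pi^{-1}\overline{\mathcal{V}}\bigr)\le H\bigl(\bigvee_{i=0}^{n-1}\overline{\psi}^{-i}\overline{\mathcal{V}}\bigr)$.

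From this the conclusion is immediate: dividing by $n$, letting $n\to\infty$, and reversing the inequality gives, for every open cover $\overline{\mathcal{V}}$ of $\overline X$,
\[
\lim_{n\to\infty}\tfrac1n H\Bigl(\bigvee_{i=0}^{n-1}\overline{\psi}^{-i}\overline{\mathcal{V}}\Bigr)\ \ge\ \lim_{n\to\infty}\tfrac1n H\Bigl(\bigvee_{i=0}^{n-1}\psi^{-i}\pi^{-1}\overline{\mathcal{V}}\Bigr),
\]
and the right-hand side is at most $\htop(\psi)$ by definition. Taking the supremum over all open covers $\overline{\mathcal{V}}$ of $\overline X$ on the left gives $\htop(\overline{\psi})\le\htop(\psi)$. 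Alternatively, since the paper works with Bowen's spanning-set formulation on compact metric spaces, I could instead give the one-line metric argument: fix a metric $d$ on $X$ and let $\overline d(x,y):=\min\{d(a,b):\pi(a)=x,\pi(b)=y\}$ (or any metric inducing the quotient topology); since $\pi$ is uniformly continuous, an $(n,\delta)$-spanning set for $(X,\psi)$ maps onto an $(n,\epsilon)$-spanning set for $(\overline X,\overline{\psi})$ with $\epsilon\to 0$ as $\delta\to 0$, hence $r_n(\epsilon,\overline X)\le r_n(\delta,X)$ and $h_{\overline d}(\overline{\psi})\le h_d(\psi)$.

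The only genuine subtlety — and the step I would be most careful about — is the role of compactness and of surjectivity of $\pi$: surjectivity is exactly what makes $\pi^{-1}$ of a cover again a cover and what makes the pulled-back subcover genuinely cover $X$, while compactness is needed so that finite subcovers exist and so that ``continuous quotient map'' behaves well (e.g. so that $\pi$ is automatically a closed map and the quotient is Hausdorff, hence metrizable in the setting at hand). Since in all our applications $X$ and $\overline X$ are compact manifolds and $\pi$ is a genuine topological quotient, these hypotheses are automatic, and I would simply cite \cite{AKM} for the general statement while including the short open-cover argument above for completeness.
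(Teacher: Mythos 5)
The paper offers no proof of this proposition at all --- it is quoted verbatim as Theorem~5 of \cite{AKM} --- so the only question is whether your reconstruction of the Adler--Konheim--McAndrew open-cover argument is sound. As written it is not: your key inequality (iv) points the wrong way. You assert $N(\pi^{-1}\overline{\mathcal{V}})\le N(\overline{\mathcal{V}})$ and correctly deduce from it $H\bigl(\bigvee_{i=0}^{n-1}\psi^{-i}\pi^{-1}\overline{\mathcal{V}}\bigr)\le H\bigl(\bigvee_{i=0}^{n-1}\overline{\psi}^{-i}\overline{\mathcal{V}}\bigr)$, but your concluding step then reads: the right-hand side of the displayed inequality is $\ge$ the left-hand side, the left-hand side is at most $\htop(\psi)$, hence take suprema. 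From $A\ge B$ and $B\le\htop(\psi)$ nothing follows about $A$; taking the supremum over $\overline{\mathcal{V}}$ of an inequality of this shape yields at best a \emph{lower} bound on $\htop(\overline{\psi})$, not the upper bound the proposition claims.

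The inequality you actually need is the reverse one, $N(\overline{\mathcal{V}})\le N(\pi^{-1}\overline{\mathcal{V}})$ (in fact equality holds), and this is precisely where surjectivity of $\pi$ enters: if $\{\pi^{-1}(V_1),\dots,\pi^{-1}(V_m)\}$ covers $X$, then $\{V_1,\dots,V_m\}$ covers $\overline{X}$, since every $y\in\overline{X}$ equals $\pi(x)$ for some $x$, and $x\in\pi^{-1}(V_j)$ forces $y\in V_j$. (The direction you stated --- pulling back a subcover --- requires no surjectivity at all, contrary to your parenthetical remark.) With items (i)--(iii), which are correct, this gives $H\bigl(\bigvee_{i=0}^{n-1}\overline{\psi}^{-i}\overline{\mathcal{V}}\bigr)\le H\bigl(\bigvee_{i=0}^{n-1}\psi^{-i}\pi^{-1}\overline{\mathcal{V}}\bigr)$, hence the per-cover entropy of $\overline{\psi}$ relative to $\overline{\mathcal{V}}$ is at most that of $\psi$ relative to $\pi^{-1}\overline{\mathcal{V}}$, which is at most $\htop(\psi)$; the supremum over $\overline{\mathcal{V}}$ then finishes the proof. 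So the defect is a reversed inequality rather than a missing idea, and it is immediately repairable. Your alternative Bowen-style argument in the final paragraph --- pushing an $(n,\delta)$-spanning set forward through the uniformly continuous surjection $\pi$ --- is correct as stated and would by itself suffice in the compact metric setting in which the paper applies the proposition.
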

 
 \begin{remark}[{\bf Anosov diffeomorphisms minimize entropy}]
 \label{remark:franks}
 Franks proved in \cite{Fra} the following: Let $f:T^n\to T^n$ be a diffeomorphism of an $n$-dimensional torus such that $f_*:H_1(T^n)\to H_1(T^n)$ has all eigenvalues outside the unit circle. Then $f$ is topologically semi-conjugate to a linear Anosov diffeomorphism 
 determined by the linear diffeomorphism $A(f):T^n\to T^n$ 
 induced by the map $f_*$.  Proposition \ref{prop:AKM5} implies that any $\htop(f)\geq \htop(A(f))$.  Thus Anosov diffeomorphisms of tori minimize entropy in their homotopy class.
 \end{remark}

\begin{proposition}[=Thm.\ 4 in \cite{AKM}: $\htop$ and invariant subspaces]
\label{prop:max}
Let $X$ be a topological space and suppose 
that $X=X_1\cup X_2$ for some closed subspaces $X_i\subset X$.  Let $\phi:X\to X$ be a continuous map such that $\phi(X_i)\subseteq X_i$ for each $i=1,2$.  Then
\[\htop(\phi)=\max\{\htop(\phi|_{X_1}),\htop(\phi|_{X_2})\}.\]
\end{proposition}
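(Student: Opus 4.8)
The plan is to prove Proposition~\ref{prop:max} (which is Theorem~4 of \cite{AKM}) directly from the span-counting definition of entropy recorded in \S\ref{subsection:entropy}, rather than appealing to an external source; it is short enough to be self-contained and the argument is instructive. The two inequalities $\htop(\phi)\geq \htop(\phi|_{X_i})$ and $\htop(\phi)\leq \max\{\htop(\phi|_{X_1}),\htop(\phi|_{X_2})\}$ are handled separately, and the first is essentially trivial: since each $X_i$ is a closed, hence compact, $\phi$-invariant subset of the compact space $X$, Lemma~\ref{lemma:relative1} gives $\htop(\phi|_{X_i})=h_d(\phi,X_i)\leq \sup_K h_d(\phi,K)=\htop(\phi)$, where the supremum is over compact subsets $K\subseteq X$. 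Taking the max over $i=1,2$ yields ``$\geq$''.

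For the reverse inequality, the key observation is that any compact $K\subseteq X$ decomposes as $K=(K\cap X_1)\cup(K\cap X_2)$ with each piece compact. First I would record the subadditivity of spanning numbers under such a decomposition: if $E_i$ is an $(n,\epsilon)$-spanning set for $K\cap X_i$, then $E_1\cup E_2$ is an $(n,\epsilon)$-spanning set for $K$, so that
\[
r_n(\epsilon,K)\le r_n(\epsilon,K\cap X_1)+r_n(\epsilon,K\cap X_2)\le 2\max_i r_n(\epsilon,K\cap X_i).
\]
Taking $\tfrac1n\log$, using $\tfrac1n\log 2\to 0$, passing to $\limsup_{n\to\infty}$ and then $\lim_{\epsilon\to 0}$, and using that $\limsup$ of a max is the max of the $\limsup$s for two sequences, we obtain $h_d(\phi,K)\le \max_i h_d(\phi,K\cap X_i)$. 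Since $K\cap X_i$ is a compact subset of $X$, each term is $\le \htop(\phi|_{X_i})$ — more directly, it is $\le h_d(\phi,X_i)=\htop(\phi|_{X_i})$ by Lemma~\ref{lemma:relative1} and the fact that $h_d(\phi,\,\cdot\,)$ is monotone in the compact set. Hence $h_d(\phi,K)\le \max\{\htop(\phi|_{X_1}),\htop(\phi|_{X_2})\}$ for every compact $K$, and taking the supremum over $K$ gives $\htop(\phi)\le\max\{\htop(\phi|_{X_1}),\htop(\phi|_{X_2})\}$.

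I expect the only mild subtlety — the ``main obstacle,'' such as it is — to be the bookkeeping at the limit stage: one must be careful that the interchange of $\limsup_n$ with the finite maximum over $i$ is legitimate (it is, since for two real sequences $\limsup_n \max(a_n,b_n)=\max(\limsup_n a_n,\limsup_n b_n)$), and that the $\lim_{\epsilon\to0}$ of a monotone-in-$\epsilon$ quantity commutes past the same finite max. Since the relevant quantities are monotone nondecreasing as $\epsilon\downarrow 0$, both limits exist and the interchanges are routine. Everything else is direct from the definitions, and the metric used on $X$ is the one implicit in \S\ref{subsection:entropy}; by the remark there (following Bowen \cite{B}) $h_d$ agrees with $\htop$ on the compact manifolds, and more generally on the compact spaces, at issue, so no dependence on the choice of metric arises in the final statement.
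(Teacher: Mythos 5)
Your argument is correct, but it is genuinely a different route from the paper's: the paper offers no proof at all for Proposition~\ref{prop:max} --- it imports the statement as Theorem~4 of \cite{AKM}, whose original proof is phrased in terms of open covers of a compact topological space --- whereas you prove it directly from Bowen's spanning-set definition recorded in \S\ref{subsection:entropy}. Both directions of your inequality are sound: the lower bound via Lemma~\ref{lemma:relative1} and the sup over compacta, and the upper bound via $r_n(\epsilon,K)\le 2\max_i r_n(\epsilon,K\cap X_i)$ followed by the (legitimate) interchange of $\limsup_n$ and $\lim_{\epsilon\to 0}$ with a finite maximum. What your approach buys is self-containedness and an argument that visibly extends to any finite closed invariant cover; what it costs is generality of hypothesis: the proposition as stated allows an arbitrary topological space, and your proof needs a compact metric space together with the identification $\htop=h_d$, which is exactly the setting in which the paper applies the result, so nothing is lost in context. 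Two small points worth making explicit if you write this up: (i) with the paper's convention that a spanning set satisfies $E\subseteq K$, the monotonicity $h_d(\phi,K')\le h_d(\phi,K)$ for $K'\subseteq K$ is not literally $r_n(\epsilon,K')\le r_n(\epsilon,K)$ but requires replacing $\epsilon$ by $2\epsilon$ (project each useful point of a spanning set for $K$ to a nearby point of $K'$), which is harmless after $\epsilon\to 0$; and (ii) the step $h_d(\phi,K\cap X_i)=h_d(\phi|_{X_i},K\cap X_i)\le h_d(\phi|_{X_i})=\htop(\phi|_{X_i})$ uses the invariance of $X_i$, so that the relevant orbits and spanning sets never leave $X_i$ --- you should say so rather than lean only on monotonicity.
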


Bowen provides us in the situation of  Proposition \ref{prop:AKM5} with an inequality in the opposite direction:

\begin{proposition}[=Thm.\ 17 +Cor.\  18 of \cite{B}: $\htop$ and fibers]
\label{prop:B17}
In the situation of Proposition  \ref{prop:AKM5}, assume that $X$ and $\overline X$ are compact manifolds. Then 
\[
\htop(\psi)\leq \htop(\overline\psi)+\sup_{y\in \overline X}\htop(\psi,\pi^{-1}(y))\]
where $\htop(\psi,\pi^{-1}(y))$ denotes the topological entropy of $\psi$ relative to the compact set $\pi^{-1}(y)$ (see above).
In particular if $\overline\psi$ is the identity then  
\[
\htop(\psi)=\sup_{y\in Y}\htop(\psi_{\pi^{-1}(y)}).
\]
\end{proposition}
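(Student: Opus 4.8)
The first inequality is Theorem 17 of Bowen \cite{B}, and the ``in particular'' is Corollary 18 there; the latter is a short formal consequence of the former together with Lemma \ref{lemma:relative1}. Here is how I would organize the argument.

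For the first inequality, the plan is to bound, for a fixed $\epsilon>0$ and each $n$, the minimal cardinality $r_n(\epsilon,X)$ of an $(n,\epsilon)$-spanning set of the compact manifold $X$ for $\psi$ (in the sense of \S\ref{subsection:entropy}) by combining a spanning set of the base $\overline X$ with spanning sets of the fibers. First I would take a minimal $(n,\epsilon')$-spanning set $E\subset\overline X$ for $\overline\psi$, so that the dynamical balls $B^{\overline\psi}_n(e,\epsilon')$, $e\in E$, cover $\overline X$, whence $X=\bigcup_{e\in E}\pi^{-1}\big(B^{\overline\psi}_n(e,\epsilon')\big)$. Next I would cover each ``tube'' $\pi^{-1}\big(B^{\overline\psi}_n(e,\epsilon')\big)$ by $(n,\epsilon)$-dynamical balls in $X$ centered at the points of a minimal $(n,\epsilon/2)$-spanning set of the central fiber $\pi^{-1}(e)$. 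This gives an inequality of the shape $r_n(\epsilon,X)\le |E|\cdot\sup_{y\in\overline X}r_n(\epsilon/2,\pi^{-1}(y))$; applying $\tfrac1n\log(\cdot)$, then $\limsup_{n\to\infty}$, and finally letting $\epsilon\to0$ (so that $\epsilon'\to0$ as well) yields $\htop(\psi)\le\htop(\overline\psi)+\sup_{y}\htop(\psi,\pi^{-1}(y))$. The step I expect to be the real obstacle — indeed the only non-formal point — is showing that an $(n,\epsilon/2)$-spanning set of the fiber $\pi^{-1}(e)$ genuinely $(n,\epsilon)$-spans the entire tube $\pi^{-1}\big(B^{\overline\psi}_n(e,\epsilon')\big)$ once $\epsilon'$ is small, uniformly over $e\in\overline X$ and over $n$; this is a compactness argument (the tubes degenerate onto the fibers as $\epsilon'\to0$, and the fiber-spanning number is upper semicontinuous under this degeneration), and I would invoke \cite{B} for it rather than reproduce it.

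For the ``in particular'', assume $\overline\psi=\mathrm{id}_{\overline X}$. Then $\htop(\overline\psi)=0$, since for the identity $r_n(\epsilon,\overline X)=r_1(\epsilon,\overline X)$ is independent of $n$ and hence $\tfrac1n\log r_n(\epsilon,\overline X)\to0$; so the first inequality already gives $\htop(\psi)\le\sup_{y}\htop(\psi,\pi^{-1}(y))$. For the reverse inequality, observe that each fiber $\pi^{-1}(y)$ is a closed, hence compact, subset of $X$ that is $\psi$-invariant, because $\psi(\pi^{-1}(y))\subseteq\pi^{-1}(\overline\psi(y))=\pi^{-1}(y)$; therefore, directly from the definition of $h_d(\psi)$ as the supremum of $h_d(\psi,K)$ over compact $K\subseteq X$, we get $\htop(\psi)=h_d(\psi)\ge h_d(\psi,\pi^{-1}(y))=\htop(\psi,\pi^{-1}(y))$ for every $y\in\overline X$, and so $\htop(\psi)\ge\sup_{y}\htop(\psi,\pi^{-1}(y))$. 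Combining the two bounds gives the asserted equality, and Lemma \ref{lemma:relative1} identifies each term $\htop(\psi,\pi^{-1}(y))$ with $\htop(\psi|_{\pi^{-1}(y)})$. Thus, modulo Bowen's uniform compactness estimate behind the first inequality, the ``in particular'' is pure bookkeeping with the definition of $h_d$.
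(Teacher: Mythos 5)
The paper gives no proof of this proposition; it is quoted verbatim as Theorem 17 and Corollary 18 of Bowen \cite{B}, so there is nothing for your argument to diverge from. Your reconstruction is correct: you rightly identify the uniform tube-spanning estimate as the only non-formal step of the first inequality and defer it to \cite{B}, and your derivation of the ``in particular'' clause from $\htop(\mathrm{id})=0$, the $\psi$-invariance and compactness of the fibers, and Lemma \ref{lemma:relative1} is complete and accurate.
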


We will also need the following.  While surely known to experts, we could not find in the literature.  

\begin{proposition}
\label{proposition:rayaction}
Fix $n\geq 2$.  Let $A\in\GL_n(\Rb)$ and let 
$\psi_A:S^{n-1}\to S^{n-1}$ be the induced diffeomorphism on the space of rays through $0\in\Rb^n$.  Then $\htop(\psi_A)=0$.
\end{proposition}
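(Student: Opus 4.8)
The plan is to reduce to the case where $A$ is a single real Jordan block (up to positive scaling) and then to exhibit, for each $\epsilon$, an $(n,\epsilon)$-spanning set whose cardinality grows only polynomially in $n$, so that the exponential growth rate is $0$. First I would reduce to a convenient normal form: write $A = RA'$ where $R>0$ is a scalar and $A'$ has spectral radius equal to some fixed value; since $\psi_A$ depends only on $A$ up to positive scaling, we may assume $A$ has all eigenvalues of a fixed modulus $r$, and after another global rescaling that $r=1$ — that is, $A$ may be taken to have spectral radius $1$. (Alternatively one can work directly with the fact that $\psi_A = \psi_{A^k}$ is not true, but $\psi_A$ conjugated to $\psi_{PAP^{-1}}$ via $\psi_P$ is, so we may pass to real Jordan form.) The key point is that the action on $S^{n-1}$ is really the projectivization-by-rays of the dynamics of $A^n v$: for $v$ with $\|v\|=1$, $\psi_A^j(v) = A^j v / \|A^j v\|$, and the slow (at most polynomial) growth/decay of $\|A^j v\|$ when $\specrad(A)=1$ is what forces zero entropy.

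The main step is the spanning-set estimate. Fix $\epsilon>0$. Cover $S^{n-1}$ by a fixed finite collection $\{U_\alpha\}$ of small coordinate patches (e.g. hemispheres where one coordinate dominates). On such a patch one can choose Lipschitz coordinates identifying rays with an affine chart, in which $\psi_A^j$ is a projective transformation with matrix $A^j$. When $\specrad(A)=1$, each entry of $A^j$ is bounded by a polynomial $C(1+j)^{d}$ in $j$ (with $d<n$), and the denominators $\|A^j v\|$ are bounded below along the compact set of rays not sent near the "vanishing locus" of $A^j$; one then gets a Lipschitz bound $\mathrm{Lip}(\psi_A^j) \le \mathrm{poly}(j)$ on the relevant region, or more carefully, a bound of this shape on the finitely many pieces into which $S^{n-1}$ must be partitioned at each time. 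Covering $S^{n-1}$ by balls of radius $\delta_n := \epsilon / \mathrm{poly}(n)$ then produces an $(n,\epsilon)$-spanning set of cardinality $r_n(\epsilon) \le C \delta_n^{-(n-1)} = C\, \mathrm{poly}(n)^{n-1}/\epsilon^{n-1}$; hence $\tfrac1n\log r_n(\epsilon) \to 0$ as $n\to\infty$, and taking $\epsilon\to 0$ gives $h_d(\psi_A)=0$. Since $S^{n-1}$ is a compact manifold this equals $\htop(\psi_A)$ by the discussion in \S\ref{subsection:entropy}.

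An alternative, and perhaps cleaner, route avoids the Lipschitz estimates entirely by a filtration argument. Decompose $V=\Rb^n$ into the direct sum $V_{>}\oplus V_{=}\oplus V_{<}$ of the generalized eigenspaces with eigenvalue modulus $>1$, $=1$, $<1$. The rays in $V_{>}$ (resp.\ $V_{<}$) form $\psi_A$-invariant subspheres $S_{>}\cong S^{\dim V_{>}-1}$ and $S_{<}$, and one checks directly that every ray not in $S_{<}$ is attracted under forward iteration toward $S_{>\ge}:=$ (rays in $V_{>}\oplus V_{=}$); more precisely one can stratify $S^{n-1}$ into finitely many $\psi_A$-invariant locally closed pieces, on each of which $\psi_A$ is (up to the induced dynamics on a smaller sphere and a contracting/gradient-like normal behavior) essentially the identity. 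Using Proposition \ref{prop:max} to reduce entropy to the closures of these pieces, Proposition \ref{prop:AKM5} (quotient maps don't increase entropy) to pass to the sphere-of-rays of each generalized eigenspace, and Proposition \ref{prop:B17} in the case where the base dynamics is the identity, one reduces inductively in $n$ to the two base mechanisms — a rotation (eigenvalues on the unit circle, modulus $1$: a compact-group action, $\htop=0$) and a pure contraction/expansion (a single attracting or repelling fixed point on the sphere, $\htop=0$). The induction on $n$ closes because each building block strictly decreases the dimension.

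The hard part is making the stratification of $S^{n-1}$ into $\psi_A$-invariant pieces precise and verifying the hypotheses of Propositions \ref{prop:max}, \ref{prop:AKM5}, \ref{prop:B17} at each stage — in particular controlling the "transition" strata where rays move between generalized eigenspaces of different moduli, and checking that on those strata the induced map on fibers has zero entropy (this is where the polynomial-growth bound for $\specrad=1$ blocks, or a direct gradient-like argument, is really needed). Everything else is bookkeeping: the scaling invariance $\psi_A=\psi_{cA}$ for $c>0$, the conjugacy $\psi_{PAP^{-1}}=\psi_P\psi_A\psi_P^{-1}$ allowing passage to real Jordan form, and the standard volume/covering estimate $r_n(\epsilon,S^{n-1})=O(\epsilon^{-(n-1)})$ for the identity.
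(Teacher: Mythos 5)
Your first route contains a step that is false as stated: the claimed bound $\mathrm{Lip}(\psi_A^j)\le \mathrm{poly}(j)$ after normalizing $\specrad(A)=1$. Rescaling $A$ by a positive scalar makes the spectral radius $1$ but does not make all eigenvalue moduli equal to $1$; if $A$ also has eigenvalues of modulus $<1$, then for rays near the contracted directions the denominator $\|A^j v\|$ decays exponentially and the derivative of $\psi_A^j$ grows exponentially there (already for $A=\mathrm{diag}(2,1/2)$ acting on $S^1$, the derivative of $\psi_A^j$ at the repelling fixed points is $4^j$). The entropy is still zero, but not because covering numbers are controlled by a uniform polynomial Lipschitz bound --- it is because the exponential expansion is transient (each orbit spends boundedly many iterates in the expanding region), and your hedge about partitioning $S^{n-1}$ ``at each time'' is precisely the missing argument, not bookkeeping. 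Your second route is closer to a proof, but you yourself flag its central step --- constructing the invariant stratification and verifying zero entropy on the transition strata --- as ``the hard part,'' and that is exactly what is not supplied: Proposition~\ref{prop:max} needs closed invariant pieces covering $S^{n-1}$, and the closures of your strata contain the transition dynamics whose entropy you have not bounded.

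The paper's proof is short and rests on an ingredient absent from both of your routes: Bowen's theorem that topological entropy is carried on the nonwandering set, $\htop(\psi_A)=\htop(\psi_A|_{\Omega(\psi_A)})$. The paper identifies $\Omega(\psi_A)$ with the union of the subspheres $S(V_i)$ of rays in the eigenspaces $V_i$, on each of which $\psi_A$ acts as a rotation and hence with zero entropy, and then Proposition~\ref{prop:max} finishes. This single appeal to the nonwandering set replaces all of the transient-expansion analysis; if you want to salvage your second route, the cleanest fix is to import exactly this fact rather than attempting the stratification directly.
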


When the eigenvalues of $A$ are all distinct, the diffeomorphism $\psi_A$ is Morse-Smale, and the result is well known. 

\begin{proof}
Consider the eigenspaces $V_i$ of $A$, and their 
corresponding sets $S(V_i)$ of rays, giving subspheres $S(V_i)\subseteq S^{n-1}$.   We note that for each $i$, the diffeomorphism $\psi_A$ acts on the sphere $S(V_i)$ by a rotation (which is $\pm$ the identity in case $V_i$ is the eigenspace for a real eigenvalue, the sign being that of the eigenvalue); in particular 
\begin{equation}
\label{eq:rays2}
\htop(\psi_A|V_i)=0.
\end{equation}  

Recall that the {\em nonwandering set} $\Omega(\psi_A)$ is defined to be the set of $x\in S^{n-1}$ such that given any neighborhood $U$ of $x$, there exists $n>0$ so 
that $f^n(U)\cap U\neq \emptyset$.  Bowen \cite{B2} showed that 
\begin{equation}
\label{eq:rays1}
\htop(S^{n-1})=\htop(\psi_A|\Omega(\psi_A)).
\end{equation}
The definition of eigenspace  
gives that 
\[\Omega(\psi_A)=\cup_iS(V_i).\]
This, together with \eqref{eq:rays1} and Proposition \ref{prop:max}, 
implies that 
\begin{equation}
\label{eq:rays3}
\htop(\psi_A)=\max\{\htop (\psi_A|S(V_i))\}.
\end{equation}

But \eqref{eq:rays2} gives that each term of the right-hand side of \eqref{eq:rays3} equals $0$, from which we conclude that 
$\htop(\psi_A)=0$, proving the proposition.
\end{proof}

\subsection{Computing $h_{\rm top}(f_T)$}
\label{subsection:htopmain}

In this subsection we prove Property 1 of Theorem \ref{theorem:main1}. We compute $\htop(f_T)$ by determining $\htop$ of the diffeomorphisms at each stage of the construction of $f_T$ given in \S\ref{subsection:construct} above. 

Corollary 2 of \cite{AKM}) states that if $\psi$ is any homeomorphism then $\htop(\psi^k)=|k|\htop(\psi)$ for all $k$.  Hence we can always take an appropriate power of any homeomorphism to assume that each member of a ``periodic set'' is invariant, compute the entropy of that, then divide. 
So for the purpose  of computing various entropies associated with $T$,  this  allows us to assume that $T$ leaves $A[2]$ pointwise fixed. Henceforth we make this assumption.

\para{Step 1($\htop(T_A)$ and $\htop(T_Y)$):}
Note that $T_A :A\to A$ is an endomorphism of the compact 
Lie group $A$ whose action on its Lie algebra  is just the action $T$ on $V$.  Corollary 16 of \cite{B} then tells us that 
\begin{equation}
\label{eq:evals3}
\htop(T_A)=\sum_{|\lambda_i|>1}\log |\lambda_i|.
\end{equation}
where the sum is over all eigenvalues $\lambda_i$ of $T$ of absolute value $>1$ (counted with multiplicity).  

Step 1 of the construction of $f_T$ given in \S\ref{subsection:construct} 
gives the following commutative diagram:
\begin{equation}
\label{eq:phihat}
\begin{array}{ccc}
Y&\stackrel{T_Y}{\longrightarrow}&Y\\
\pi \big\downarrow&&\big\downarrow\pi\\
A&\stackrel{T_A}{\longrightarrow}&A
\end{array}
\end{equation}

\begin{corollary}\label{cor:}
With the notation as above, $\htop(T_Y)=\htop(T_A)$.
\end{corollary}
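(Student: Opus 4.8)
The plan is to sandwich $\htop(T_Y)$ between two copies of $\htop(T_A)$ using the comparison results for quotient maps stated above, applied to the blow-down map $\pi\colon Y\to A$ and the pair $(T_Y,T_A)$, which fit into the commutative square \eqref{eq:phihat}. Since $\pi$ is a continuous surjection of compact spaces (hence a quotient map) and $T_Y$ descends to $T_A$, Proposition \ref{prop:AKM5} gives immediately the inequality $\htop(T_A)\le\htop(T_Y)$.

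For the reverse inequality I would invoke Proposition \ref{prop:B17}: since $Y$ and $A$ are compact manifolds,
\[
\htop(T_Y)\le\htop(T_A)+\sup_{y\in A}\htop\bigl(T_Y,\pi^{-1}(y)\bigr),
\]
so it suffices to show that the fiberwise supremum vanishes. If $y\notin A[2]$ then $\pi$ is a diffeomorphism near $y$, so $\pi^{-1}(y)$ is a single point and $\htop(T_Y,\pi^{-1}(y))=0$. If $y=\alpha\in A[2]$, then by the standing assumption that $T$ fixes $A[2]$ pointwise, $T_Y$ preserves the boundary $3$-sphere $\partial_\alpha Y=\pi^{-1}(\alpha)$, and Lemma \ref{lemma:relative1} identifies the relative entropy with $\htop(T_Y|\partial_\alpha Y)$. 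By the local model of the real oriented blow-up, $T_Y$ acts on $\partial_\alpha Y$ as the diffeomorphism induced on the sphere of rays in $T_\alpha A\cong V$ by the linearization $d(T_A)_\alpha=T$; that is, $T_Y|\partial_\alpha Y=\psi_T$ in the notation of Proposition \ref{proposition:rayaction}, which then yields $\htop(T_Y|\partial_\alpha Y)=0$. Hence the supremum is $0$ and $\htop(T_Y)\le\htop(T_A)$, completing the proof.

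Both displayed inequalities are essentially formal once their hypotheses are checked, so I do not expect a serious obstacle; the only step requiring a moment's care is the identification $T_Y|\partial_\alpha Y=\psi_T$, i.e. verifying that the lift of $T_A$ to the real oriented blow-up restricts on each exceptional $S^3$ to the ray action of the derivative of $T_A$ at the fixed point $\alpha$. This is immediate from the coordinate description $\pi(t,u)=tu$ together with the fact that $T_A$ is the descent of a linear map of $V$, but it is the step that genuinely connects this corollary to Proposition \ref{proposition:rayaction}.
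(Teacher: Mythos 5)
Your proposal is correct and follows essentially the same route as the paper: the lower bound from Proposition \ref{prop:AKM5}, the upper bound from Proposition \ref{prop:B17}, and the vanishing of the fiberwise entropy via Lemma \ref{lemma:relative1} and Proposition \ref{proposition:rayaction} applied to the ray action of $T$ on each exceptional $S^3$. The only cosmetic difference is that you treat the fibers over $A[2]$ and its complement as explicit separate cases, whereas the paper compresses this by noting the supremum is attained at the origin; the substance is identical.
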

\begin{proof}
Applying Proposition \ref{prop:AKM5} to 
the quotient map \eqref{eq:phihat} gives 
\begin{equation}
\label{eq:phihat2}
\htop(T_Y)\geq \htop(T_A).
\end{equation}
Proposition \ref{prop:B17} applied 
to \eqref{eq:phihat} gives that 
\begin{equation}
\label{eq:phihat3}
\htop(T_Y)\leq \htop(T_A)+\sup_{y\in A}\htop(T_Y,\pi^{-1}(y)).
\end{equation}
where the right-most term of \eqref{eq:phihat3} is the entropy of 
$T_Y$ relative to the compact set $\pi^{-1}(y)$ (cf. \S\ref{subsection:entropy}).  Since $T_A$ commutes with the translation group $A[2]$, the quantity $\sup_{y\in A}\htop(T_Y,\pi^{-1}(y))$ 
is realized over the origin $y=o\in A[2]$.   Note that $\pi^{-1}(o)$ 
is a $3$-sphere. Since 
$\pi^{-1} o$ is $T_Y$-invariant, Lemma \ref{lemma:relative1} 
implies that 
\[
\htop(T_Y,\pi^{-1}(o))=\htop(T_Y|_{\pi^{-1}(o)}).
\]
But $T_Y|_{\pi^{-1}(o)}$ is simply the induced action of the linear transformation  $T$ on the space of rays in $V$.   By Proposition \ref{proposition:rayaction} this diffeomorphism has 
topological entropy $0$.  Thus \eqref{eq:phihat3} becomes
\begin{equation}
\label{eq:phihat4}
\htop(T_Y)\leq \htop(T_A).
\end{equation}

Combining equations \eqref{eq:phihat2} and \eqref{eq:phihat4} gives 
$\htop(T_Y)=\htop(T_A)$, as desired.
\end{proof}

\para{Step 2 ($\htop(T_{Y'})$):}
By construction, $Y'$ is the union of $Y$ together with the 16 collars $[0,1]\times \partial_\alpha Y$.  Further, $T_{Y'}:Y'\to Y'$ leaves invariant the closed subset $Y$ and leaves invariant each closed subset $[0,1]\times \partial_\alpha Y$.  Proposition \ref{prop:max} thus implies that
\begin{equation}
\label{eq:gt}
\htop(T_{Y'})=\max\{\htop(T_Y),\htop(T_{Y'}|_{[0,1]\times \partial_0Y})\}.
\end{equation}
Again by construction, the action of $T_{Y'}$ on $[0,1]\times \partial_0Y\cong [0,1]\times S^3$ is given by 
$T_{Y'}(t,z):=(t, \gamma_T(t)(z))$ where $\gamma_T$ is a smooth path in $\SL(V)$ from $T$ to ${\rm Id}$.   

Apply the last clause  of Proposition \ref{prop:B17} to the map $T_{Y'}:[0,1]\times \partial_0Y\rightarrow [0,1]\times \partial_0Y$ and the projection onto $[0,1]$.  
Since the linear action of each $\gamma_T(t)$ on the space of rays $\partial_0Y\cong S^3$ has entropy $0$ (by Proposition \ref{proposition:rayaction}), it follows that 
\begin{equation}
\label{eq:gt2}
\htop(T_{Y'}|_{[0,1]\times \partial_0Y})=0.
\end{equation}
Since $T_{Y'}|_Y=T_Y$ by definition, Equations \eqref{eq:gt} and \eqref{eq:gt2} imply that
\begin{equation}
\label{eq:gt3}
\htop(T_{Y'})=\max\{\htop(T_{Y'}|Y),\htop(T_{Y'}|_{\partial_\beta Y\times [0,1]})\}=\max\{\htop(T_Y),0\}=\htop(T_Y).
\end{equation}

Now consider the commutative diagram coming from Step 4, the Hopf collapse:
\begin{equation}
\label{eq:Hc1}
\begin{array}{ccc}
Y'&\stackrel{T_{Y'}}{\longrightarrow}&Y'\\
\pi \big\downarrow&&\big\downarrow\pi\\
\overline Y'&\stackrel{T_{\overline{Y}'}} {\longrightarrow}&\overline{Y}'
\end{array}
\end{equation}
where $\pi$ is the map that Hopf-collapses each $\partial_\alpha Y_\alpha\times\{1\}$.   Then 
\[
\htop(T_{\overline Y'}|_{\overline{[0,1]\times \partial_0Y}})\leq \htop(T_{Y'}|_{[0,1]\times \partial_0Y]})=0
\]
where the first inequality is Proposition \ref{prop:AKM5} and the second equality is Equation \ref{eq:gt2}. 

\para{Step 3 ($\htop(T_{\overline{Y}'}))$.}
Now we can compute the entropy of $T_{\overline Y'}$ exactly as we did $T_{Y'}$, with the one change being that must compute 
$\htop(T_{\overline Y'}|_{\overline{[0,1]\times \partial_0Y}})$ instead of 
$\htop(T_Y|_{[0,1]\times \partial_0Y})$.  Proposition \ref{prop:max} then gives
\[
\htop(T_{\overline Y'})=
\max\{\htop(T_Y),\htop(T_{\overline Y'}|_{\overline{[0,1]\times \partial_0Y}})\}
=\max\{ \htop(T_Y),0\}=\htop(T_Y).
\]

\para{Step 4 ($\htop(f_T)$).} The involution $(-1)_{\overline Y'}$ of $\overline Y'$ commutes with 
$T_{\overline Y'}$, giving the following commutative diagram:
\begin{equation}
\begin{array}{ccc}
\overline Y'&\stackrel{T_{\overline Y'}}{\longrightarrow}&\overline Y'\\
\pi \big\downarrow&&\big\downarrow\pi\\
M&\stackrel{f_T}{\longrightarrow}&M
\end{array}
\end{equation}

Proposition \ref{prop:AKM5} gives 
\begin{equation}
\label{eq:ft1}
\htop(f_T)\leq \htop(T_{\overline Y'})
\end{equation}

Proposition \ref{prop:B17} gives 
\[\htop(T_{\overline Y'})\leq \htop(f_T)+\sup_{x\in M}\htop(T_{\overline Y'},\pi^{-1}(x)).\]
Since $\pi^{-1}(x)$ is $T_{\overline Y'}$-invariant and since  $\pi^{-1}(x)$ is a finite set:
\[
\htop(T_{\overline Y'},\pi^{-1}(x))=\htop(T_{\overline Y'}|_{\pi^{-1}(x)})=0
\]
so that
\begin{equation}
\label{eq:ft2}
\htop(T_{\overline Y'})\leq \htop(f_T)+0=\htop(f_T).
\end{equation}

Combining \eqref{eq:ft1} and \eqref{eq:ft2} gives
\[
\htop(f_T)=\htop(T_{\overline Y'}).
\]


Combining all of the steps above gives
\[
\textstyle \htop(f_T)=\htop(T_{\overline Y'})=\htop(T_{Y'})=\htop(T_Y)=\sum_{|\lambda_i|>1}\log |\lambda_i|.
\]

\end{document}